\numberwithin{equation}{section}
\newtheoremstyle{newline}
  {3pt}
  {3pt}
  {\itshape}
  {}
  {\bfseries}
  {.}
  {.5em}
  {}
\theoremstyle{newline}\newtheorem{theorem}{Theorem}[section]
\theoremstyle{newline}
\theoremstyle{newline}
\theoremstyle{newline}\newtheorem{lemma}{Lemma}[section]
\theoremstyle{newline}
\theoremstyle{newline}\newtheorem*{remark}{Remark}
\newtheoremstyle{newlinedefn}
  {3pt}
  {3pt}
  {}
  {}
  {\bfseries}
  {.}
  {.5em}
  {}
\theoremstyle{newlinedefn}\newtheorem{definition}{Definition}[section]
\newcommand{\bu}{\boldsymbol{u}}
\newcommand{\bv}{\boldsymbol{v}}
\newcommand{\be}{\boldsymbol{e}}
\newcommand{\boldf}{\boldsymbol{f}}
\newcommand{\beps}{\boldsymbol{\varepsilon}}
\newcommand{\betab}{\boldsymbol{\beta}}
\newcommand{\gammab}{\boldsymbol{\gamma}}
\newcommand{\bbI}{\mathbb{I}}
\newcommand{\bbT}{\mathbb{T}}
\newcommand{\bbS}{\mathbb{S}}
\newcommand{\bbU}{\mathbb{U}}
\newcommand{\bbTd}{\mathbb{T}^{\delta}}
\DeclareMathOperator{\tr}{tr}
\begin{document}

 \title[Existence of weak solutions to a viscoelastic strain-limiting model]{Existence of large-data global weak solutions\\to a model of a strain-limiting viscoelastic body}
%
%
%

 \maketitle
\begin{center}
\footnotesize
 \textsc{MIROSLAV BUL\'{I}\v{C}EK\textsuperscript{1}, VICTORIA PATEL\textsuperscript{2,}\footnote[1]{Corresponding author.
 \\
 \subjclass{\textit{AMS 2020 Subject Classification:} 35M13, 35K99, 74D10, 74H20}
 \\
\keywords{\textit{Keywords:} nonlinear viscoelasticity, strain-limiting theory, evolutionary problem, global existence, weak solution, regularity}}, YASEMIN \c{S}ENG\"{U}L\textsuperscript{3}, AND ENDRE S\"{U}LI\textsuperscript{2}}

\scriptsize 
 
 \textsuperscript{1}\textit{Faculty of Mathematics and Physics,
Charles University Prague, Sokolovsk\'{a} 83, 186 75 Prague 8,
Czech Republic. }

 \textsuperscript{2}\textit{Mathematical Institute, University of Oxford, Andrew Wiles Building, Woodstock Road, Oxford OX2 6GG, UK.}
 
 \textsuperscript{3}\textit{Faculty of Engineering and Natural Sciences, Sabanci University, Tuzla 34956, Istanbul, Turkey.} 

 \normalsize
\end{center}

 \begin{abstract}
 We prove the existence of a unique large-data global-in-time weak solution to a class of models of the form
$\bu_{tt} = \mbox{div }\mathbb{T} + \boldf$ for viscoelastic bodies exhibiting strain-limiting behaviour, where the constitutive equation, relating the linearised strain tensor $\beps(\bu)$ to the Cauchy stress tensor $\bbT$, is assumed to be of the form $\beps(\bu_t) + \alpha\beps(\bu) = F(\bbT)$, where we define \( F(\bbT) = ( 1 + |\bbT|^a)^{-\frac{1}{a}}\bbT\), for constant parameters $\alpha \in (0,\infty)$ and $a \in (0,\infty)$, in any number $d$ of space dimensions, with periodic boundary conditions. The Cauchy stress $\bbT$ is shown to belong to $L^{1}(Q)^{d \times d}$ over the space-time domain $Q$. In particular, in three space dimensions, if~$a \in (0,\frac{2}{7})$, then in fact $\bbT \in L^{1+\delta}(Q)^{d \times d}$ for a $\delta > 0$, the value of which depends only on $a$.
 \end{abstract}

\section{Introduction}\label{pp:secintro}
The development of a thermodynamically consistent implicit constitutive theory for continuum mechanics was initiated by Rajagopal in \cite{onICT}. While, as was noted in \cite{onICT}, implicit constitutive theories within a purely mechanical format had been studied earlier, ``these early studies do not recognize the important role thermodynamics has to play in the response of materials, nor do they provide a systematic framework that allows one to incorporate issues concerning constraints and material symmetry.'' In contrast, the theoretical framework proposed in \cite{onICT} is a systematic and thermodynamically consistent way of justifying implicit and nonlinear relationships between stress, strain and their derivatives for elastic and viscoelastic materials. In classical models of such bodies, the strain is given as a function of the stress, which leads to an explicit, usually linear, relationship between the Cauchy stress tensor \( \bbT \) and the linearised strain tensor \( \beps\) (and the derivative \( \beps_t\) in the viscoelastic case) under the small displacement gradient assumption. However, it is well-known that linear models are insufficient to describe all physical phenomena in the small strain range; see, for example, \cite{RN93}, \cite{freed2016viscoelastic} and \cite{RN105}, to name just a few, thus justifying the need for alternative models. One of the key contributions in \cite{RN20} was to use linearisations of nonlinear constitutive models to justify the presence of small strain in nonlinear models.

We are interested in a particular subclass of these implicit constitutive models, which represent a generalisation of the classical Kelvin--Voigt model to strain-limiting viscoelastic solids.
Mathematically, the term strain-limiting  refers to the fact that the strain is \textit{a priori} bounded. As we study viscoelastic solids rather than elastic ones, a linear combination of the linearised strain and its time derivative will be taken to be a bounded function of the stress.
In this paper, we will focus on the rigorous mathematical analysis of the initial-boundary-value problem that results from such models.

Strain-limiting behaviour has been exhibited in experimental procedures by certain biological matter \cite{RN97}, for example. Models of
strain-limiting materials are also of importance in the study of fracture mechanics and crack propagation. Fracture of a brittle material can occur in the small-strain range and is caused by a large stress. However, if we use a linear constitutive relation to model a body, the strain will behave like \( O(r^{-\frac{1}{2}}) \), where \( r\) is the distance to the crack tip \cite{onthenonlinearelasticresponse}. This contradicts the small displacement gradient assumption under which the model is derived. Hence, a better model might ensure that the strain is \textit{a priori} bounded, as in the case of strain-limiting models.
Another class of problems in which strain-limiting models are of relevance are those that concern bodies that undergo a small local deformation in the presence of a concentrated load. Concentration of load causes a large stress in a small subregion of the body. Thus, in order to ensure that the assumptions under which the model has been derived are not violated, one should ensure that the strain remains small.
We refer to \cite{csengul2018viscoelasticity} and the references therein for further discussion on  such models.

\subsection{Basic kinematics and derivation of the problem}
We begin by discussing how the problem of interest, stated as system (\ref{pp:equ1}) in the next section, can be deduced from standard balance equations in continuum mechanics under appropriate assumptions.

Let \( \Omega \subset\mathbb{R}^3 \) be the \it initial configuration \normalfont of a given body and \( \Omega_t \subset\mathbb{R}^3 \) the \it current configuration \normalfont of the body at time \( t\in (0, \infty)\). The initial configuration is assumed to be a stress-free state.  For a point \( \mathbf{X}\in \Omega \), we denote by \( \mathbf{x} = \chi(t, \mathbf{X}) \in \Omega_t\) the position at time $t$ of that point. The \it displacement \normalfont is given by \( \bu(t,x) := \mathbf{x} - \mathbf{X}\). The \it velocity \normalfont \( \bv\) and \it deformation gradient \normalfont \( \mathbb{F}\) are defined by
\[
\bv  := \frac{\partial \chi}{\partial t}, \quad \mathbb{F} := \frac{\partial\chi}{\partial \mathbf{X}}.
\]
The \it left Cauchy--Green stretch tensor \normalfont \( \mathbb{B}\) is defined by
\[
\mathbb{B }  := \mathbb{F}\mathbb{F}^{\mathrm{T}}.
\]
The velocity gradient \( \nabla_{\mathbf{x}}\bv \) is denoted by \( \mathbb{L}\)
with the \it linearised strain \normalfont \( \beps\) and \it symmetric part of the velocity gradient \normalfont \( \mathbb{D}\) given, respectively, by
\[
\beps = \beps(\bu):= \frac{1}{2}\big( \nabla_{\mathbf{X}}\bu + (\nabla_{\mathbf{X}}\bu)^{\mathrm{T}}\big), \quad \mathbb{D} := \frac{1}{2}\big( \mathbb{L} + \mathbb{L}^{\mathrm{T}}\big).
\]
With respect to the initial configuration, the system of balance equations for mass, linear momentum and angular momentum can be expressed in the following way:
\begin{equation}\label{pp:equ52}
\begin{aligned}
\rho &= \rho_0 \mathrm{det}(\mathbb{F}),
\\
\rho_0 \frac{\partial^2 \chi}{\partial t^2} &= \mathrm{div}_{\mathbf{X}} \mathbb{S} + \rho_0 \boldf,
\\
\mathbb{S}\mathbb{F}^{\mathrm{T}} &= \mathbb{F}\mathbb{S}^{\mathrm{T}},
\end{aligned}
\end{equation}
where \( \rho \) is the \it current density\normalfont , \( \rho_0 \) is the \it initial density\normalfont , \( \boldf\) is the \it density of body forces\normalfont , and \( \mathbb{S} \)
is the \it first Piola--Kirchhoff stress tensor\normalfont .
To close the system, a constitutive relation is required.
Following Rajagopal and Saccomandi in \cite{RN45}, we consider the following class of implicit constitutive relations:
\begin{equation}\label{pp:equ49}
\mathcal{F}( \bbT, \mathbb{B}, \mathbb{D}) = \mathbf{0}.
\end{equation}
Taking account of  representation theorems for isotropic functions, as well as the classical Kelvin--Voigt model, we consider the following subclass of constitutive relations:
\begin{equation}\label{pp:equ50}
\tilde{\nu} \mathbb{D} + \tilde{\alpha} \mathbb{B} = \tilde{\beta}_0 \bbI + \tilde{\beta}_1 \bbT + \tilde{\beta}_2 \bbT^2,
\end{equation}
where \( \tilde{\beta}_i = \tilde{\beta}_i(I_1, I_2, I_3) \) for \( i \in \{ 1,2,3\}\), are the material moduli, \( I_1 = \tr\bbT\), \( I_2 = \frac{1}{2}\tr\bbT^2\), \( I_3\ = \frac{1}{3}\tr\bbT^3\) are the principle invariants, and \( \tilde{\nu}\), \( \tilde{\alpha}\) are positive constants.

Linearising (\ref{pp:equ52}) and (\ref{pp:equ50}) under the assumption that
\begin{equation}\label{pp:equ51}
\max_{\mathbf{X}\in \Omega,\, t\in [0, \infty)} |\nabla_{\mathbf{X}}\bu |  = O(\delta),
\end{equation}
for some \( \delta \ll 1\), the constitutive relation reduces to
\begin{equation}\label{pp:equ55}
\nu \beps(\bu_t) + \alpha\beps(\bu) = \beta_0 \bbI + \beta_1 \bbT + \beta_2 \bbT^2,
\end{equation}
where \( \beta_i = \beta_i(I_1, I_2, I_3) \), \( i \in \{ 1,2,3\}\), and \( \nu
\), \( \alpha\) are positive constants. We use \( \beps\) to denote the symmetric gradient operator \( \frac{1}{2}( (\nabla\,\cdot\,) + (\nabla\,\cdot\,)^{\mathrm{T}} ) \) with derivatives  taken with respect to \( \mathbf{X}\), 
and for $\mathbb{A} \in \mathbb{R}^{d\times d}$ we shall denote by \( |\mathbb{A}|:=(\mathbb{A}:\mathbb{A})^{\frac{1}{2}}\) the Frobenius norm of \(\mathbb{A}\). The same notation will be used to denote the Euclidean norm of a $d$-component vector and the absolute value of a real number; it will be clear from the context which of these interpretations of the symbol $|\cdot|$
is intended.
We note that under the assumption (\ref{pp:equ51}), derivatives with respect to \( \mathbf{X}\) and \( \mathbf{x}\) may be used interchangeably. In particular, we have that \( \beps(\bu_t ) \approx\mathbb{D}\).

The balance equations (\ref{pp:equ52}) thereby reduce  to
\begin{equation}\label{pp:equ56}
\begin{aligned}
\rho_0 &= \rho ( 1 + \tr\beps),
\\
\rho_0 \bu_{tt} &= \mathrm{div}_{\mathbf{X}} \bbT + \rho_0 \boldf,
\\
\bbT &= \bbT^{\mathrm{T}},
\end{aligned}
\end{equation}
up to an error of order \( \delta^2 \). For the sake of simplicity we will assume henceforth that $\rho_0 \equiv 1$ and \( \nu = 1\). If \( \nu = 0 \), problem (\ref{pp:equ56}), (\ref{pp:equ55}) models a strain-limiting elastic body.  The form of (\ref{pp:equ55}) that we are particularly interested in is
\begin{equation}\label{pp:equ58}
\beps(\bu_t)  + \alpha\beps(\bu) = \frac{\bbT}{(1 + |\bbT|^a)^{\frac{1}{a}}} =: F(\bbT),
\end{equation}
for a positive parameter \( a\in (0, \infty) \), corresponding to $\beta_0 = \beta_2=0$, and $\beta_1 = (1+|\bbT|^a)^{-\frac{1}{a}}$. As \( a\) is treated as a fixed parameter we do not explicitly indicate the dependence of 
\( F\) on \( a\).
Furthermore, we note that \( F\) is a bounded function on \( \mathbb{R}^{d\times d}\), encapsulating the strain-limiting property. Indeed, it follows that the linearised strain \( \beps(\bu ) \) is bounded by the following reasoning. Multiplying both sides of (\ref{pp:equ58}) by \( \mathrm{e}^{\alpha t}\) and integrating with respect to the time variable, we deduce that
\begin{align*}
|\beps(\bu(t))| = \bigg|\mathrm{e}^{-\alpha t} \beps(\bu_0)  + \int_0^t \mathrm{e}^{\alpha(s-t)} F(\bbT(s)) \,\mathrm{d}s\bigg|\leq \mathrm{e}^{-\alpha t}\|\beps(\bu_0) \|_\infty + \frac{1}{\alpha}(1-\mathrm{e}^{-\alpha t}),\qquad t>0,
\end{align*}
where \(\bu_0 = \bu(0) \). In particular, we see that if \( \|\beps(\bu_0 ) \|_\infty\leq \alpha^{-1}\), then \( \|\beps(\bu(t) )\|_\infty\leq \alpha^{-1}\) for all \( t\in (0, \infty) \). We note that this particular choice of \( F\) has already been studied in the one-dimensional case, for example, in \cite{RN14} and \cite{RN121}.

For the sake of completeness we shall discuss the thermodynamic motivation for this choice of the function \( F\). In particular, we will show that the total energy associated with the model is decreasing in time, provided that the body force \( \boldf\equiv \mathbf{0}\). In order to simplify the presentation of the following analysis, we divide (\ref{pp:equ58}) through by \( \alpha\).  Thus we are motivated to define the function \( f_\alpha: \mathbb{R}^{d\times d}\rightarrow [0, \infty) \) by
\begin{align*}
f_\alpha(\bbT) := \frac{1}{\alpha}\int_0^{|\bbT|} \frac{t}{( 1+ t^a)^{\frac{1}{a}}}\,\mathrm{d}t,
\end{align*}
for \( \bbT \in \mathbb{R}^{d\times d}\). In particular, we have that \( \frac{\partial f_\alpha}{\partial\bbT}(\bbT) = \frac{1}{\alpha}F(\bbT) =: F_\alpha(\bbT)\).  By direct calculation, \( \nabla^2_{\bbT}f_\alpha\) is positive definite and continuous. Thus we see that \( f_\alpha\) is a convex \( C^2\) function on \( \mathbb{R}^{d\times d}\). We define the convex conjugate \( f_\alpha^*\) on \( \mathbb{R}^{d\times d}\) by
\[
f^*_\alpha (\bbS) = \sup_{\bbT \in \mathbb{R}^{d\times d}} \big( \,\bbS : \bbT - f_\alpha(\bbT) \,\big).
\]
By the Fenchel--Young inequality and the differentiability properties of \( f_\alpha\), we have that the supremum is either infinity or it is attained when $\bbS = F_\alpha(\bbT)$, and therefore
\[
f_\alpha(\bbT) + f_\alpha^*(F_\alpha(\bbT))  =  F_\alpha(\bbT) : \bbT, \qquad \bbT \in \mathbb{R}^{d \times d}.
\]
Noting that \( F_\alpha^{-1}\) is well-defined on the set of matrices with Frobenius norm strictly less than \(\alpha^{-1}\), we also have that
\[
f_\alpha(F^{-1}_\alpha(\beps)) + f^{*}_\alpha(\beps) = \beps : F_\alpha^{-1}(\beps),
\]
for every \( \beps\in \mathbb{R}^{d\times d}\), \( |\beps|< \alpha^{-1}\). With this in mind, we can write
\begin{align*}
\bbT : \beps(\bu_t) &= \Big( \bbT - \frac{\partial f^*_\alpha}{\partial \beps}(\beps(\bu))\Big) \beps(\bu_t) + \frac{\partial f^*_\alpha}{\partial \beps}(\beps(\bu)) : \beps(\bu_t)
\\
&= \Big( \bbT - F_\alpha^{-1}(\beps(\bu))\Big): \Big( F(\bbT) - \alpha\beps(\bu)\Big) + \frac{\partial}{\partial t}\Big( f_\alpha^*(\beps(\bu))\Big)
\\
&= \alpha\Big( \bbT - \bbT_{0,\alpha}\Big): \Big( F_\alpha(\bbT) - F_\alpha(\bbT_{0,\alpha})\Big)+ \frac{\partial}{\partial t}\Big( f_\alpha^*(\beps(\bu))\Big),
\end{align*}
where \( \bbT_{0,\alpha}\) is the unique element of \( \mathbb{R}^{d\times d}\) with \( \bbT_{0,\alpha} = F_\alpha^{-1}(\beps(\bu))\). Using (\ref{pp:equ56})\textsubscript{2} with $\rho_0 \equiv 1$ and assuming that the boundary conditions are chosen so that when integrating by parts all boundary integrals vanish, we get
\begin{align*}
0&= \int_\Omega \bu_{tt}\cdot \bu_t + \bbT : \beps(\bu_t) \,\mathrm{d}x
\\
&= \int_\Omega \frac{\partial}{\partial t}\bigg( \frac{|\bu_t|^2}{2} + f_\alpha^*(\beps(\bu)) \bigg) + \alpha (\bbT - \bbT_{0,\alpha}) : (F_\alpha(\bbT) - F_\alpha(\bbT_{0,\alpha})) \,\mathrm{d}x
\\
&= \int_\Omega \frac{\partial}{\partial t}\bigg( \frac{|\bu_t|^2}{2} + f_\alpha^*(\beps(\bu)) \bigg) +(\bbT - \bbT_{0,\alpha}) : (F(\bbT) - F(\bbT_{0,\alpha})) \,\mathrm{d}x.
\end{align*}
This yields
\begin{equation}\label{pp:equ59}
\begin{aligned}
&\int_\Omega\frac{|\bu_t(t) |^2}{2} + f_\alpha^*(\beps(\bu(t)))\,\mathrm{d}x + \int_0^t\int_\Omega (\bbT - \bbT_{0,\alpha}) : (F(\bbT) - F(\bbT_{0,\alpha})) \,\mathrm{d}x\,\mathrm{d}s
\\&\quad\quad
= \int_\Omega \frac{|\bu_t(0)|^2}{2} + f_\alpha^*(\beps(\bu(0)))\,\mathrm{d}x,
\end{aligned}
\end{equation}
for every \( t\in (0, \infty) \). The first term on the left-hand side represents the total energy (i.e., the sum of the kinetic and the stored energy) and the second term is a nonnegative, increasing function of \( t\) by the monotonicity of \( F\). Thus the total energy is a decreasing function of \( t\). Furthermore, we note that
\[
\frac{\partial f_\alpha^*}{\partial \bbT}(\bbT) = F^{-1}_\alpha(\bbT) = \frac{\alpha\bbT}{( 1 - \alpha^a |\bbT|^a)^{\frac{1}{a}}}.
\]
In particular, we have \( f_\alpha^*(\bbT) = \infty \) if \( |\beps(\bu) |> \alpha^{-1}\). (See \cite{RN6} and references therein for details of the reasoning from convex analysis.) Thus we must have \(|\beps(\bu) |\leq \alpha^{-1}\) a.e. in \(  (0, \infty) \times \Omega \), provided that the right-hand of (\ref{pp:equ59}) is finite.

\subsection{Statement of the model problem.} We will make some mathematical simplifications in order to make the rigorous mathematical analysis of the problem more manageable.
Neglecting (\ref{pp:equ56})\textsubscript{1} and assuming that \( \rho_0 \equiv 1\), we obtain the following system of equations:
\begin{equation}\label{pp:equ53}
\begin{aligned}
\bu_{tt} &= \mathrm{div}_{\mathbf{X}}\bbT + \boldf,
\\
\beps(\bu_t) + \alpha\beps(\bu) &= F(\bbT),
\\
\bbT&= \bbT^{\mathrm{T}},
\end{aligned}
\end{equation}
in \( [0, \infty) \times \Omega \), with suitable initial and boundary conditions.
We note that if, more generally, \( \rho_0 \in W^{1,\infty}(\Omega;\mathbb{R}_{\geq 0})\) is such that \(\rho_0 \) is uniformly bounded away from \( 0 \), then it is not much more difficult to include a variable density.

A further simplification that we will  make is to take \( \Omega = (0, 1)^d\), with dimension \( d\geq 2\), and to supplement the problem with a periodic boundary condition. This allows us to work with Fourier basis functions in a finite-dimensional approximation of the problem, leading to a sequence of semi-discrete numerical approximations. Because of this simplification, we can view the work here as a time-dependent analogue of the problem studied in \cite{RN8}, dealing with viscoelastic solids rather than elastic ones.
We note that despite its geometrical simplicity, one can still use the framework presented here to study the effects of concentrated loads that are active in the neighbourhood of the centre point of the periodic cell, assuming that the side-length of the cell is large enough so that the effects of concentrations are not effective in the neighbourhood of the boundary of the cell. Trivially, the results of the paper extend to any axiparallel parallelepiped $\Omega \subset \mathbb{R}^d$, $d \geq 2$, independent of the edge-lengths of $\Omega$.

The form of \( F\) studied in this paper  has already been considered in the purely elastic case, for example, in \cite{RN8}. For elastic solids, it has been shown in \cite{RN8} that a unique weak solution to the static problem in the periodic setting exists for every \( a\in (0, \frac{2}{d}) \) and a renormalised solution exists for every $a \in (0,\infty)$, with coincidence of these two notions of solution for the range $a \in (0,\frac{2}{d})$; and in the case of a homogeneous Dirichlet boundary condition the existence of a unique weak solution was shown for $a \in (0,\frac{1}{d})$ in \cite{RN9}. When considering mixed Dirichlet--Neumann boundary conditions, the authors of \cite{RN6} were able to show existence of a weak solution up to a penalisation on the Neumann part of the boundary. The extension of this work to the time-dependent problem, as well as further open problems, will be discussed at the end of the paper.

The constitutive relation proposed by Rajagopal in \cite{RN19} and \cite{RN20} is in fact
\begin{equation}\label{pp:equ54}
F(\bbT) = f_0(\tr\bbT, \tr\bbT^2 )\, \bbI + \frac{\bbT}{\mu_0(1 + |\bbT|^a)^{\frac{1}{a}}}.
\end{equation}
However, we note that under suitable, physically reasonable, structural assumptions on the real-valued-function \( f_0 \) the extension of the results in this paper to a problem with a constitutive relation given by (\ref{pp:equ54}) is fairly straightforward. Hence, for the sake of simplicity of the exposition, we shall assume in what follows that \( f_0 \equiv 0 \) and \( \mu_0 = 1 \) and focus on the key difficulty in the analysis: dealing with the second summand on the right-hand side of \eqref{pp:equ54}.

A similar problem to (\ref{pp:equ53}) is discussed in \cite{ItouHiromichi2018Otso} and \cite{RN46}; the authors considered a quasi-static problem in both papers, by which we mean that the term \( \bu_{tt}\) was omitted from (\ref{pp:equ53}). The function \( F\) there was given by (\ref{pp:equ54}) with \( f_0 \equiv 0 \).  It was shown that a solution to the problem exists, using an elliptic regularisation technique similar to the one employed in this paper, combined with a fixed-point argument.
Appropriate bounds were then found in order to allow the regularisation parameter to go to \( 0 \).
A marked difference here compared to the analysis in those  papers is that the stress \( \bbT \) was only shown there to belong to the space \( C([0, T]; \mathcal{M}(\overline{\Omega})^{d\times d}) \), where \( \mathcal{M}(\overline{\Omega}) \) is the space of Radon measures on \( \overline{\Omega}\).
In contrast, we shall be able to ensure that the stress is at least in the space \( L^1(0, T; L^1(\Omega)^{d\times d}) \) in any number of space dimensions $d$; and, in three space dimensions, if $a \in (0,\frac{2}{7})$, then $\bbT \in L^{1+\delta}(0,T; L^{1+\delta}(\Omega))$ for a $\delta > 0$, the value of which depends only on $a$.
Furthermore, we are able to show that the constitutive relation holds in a pointwise sense rather than in a variational sense, as was the case in \cite{ItouHiromichi2018Otso} and \cite{RN46}. This then enables us to prove the validity of the constitutive relation by making use of the fact that the stress tensor is an integrable function.

There are some important differences between the multi-dimensional problem (\ref{pp:equ1}) and its one-dimensional counterpart. In one space dimension the symmetric gradient reduces to simply the first derivative in the spatial variable. The one-dimensional time-dependent problem has been studied in \cite{RN14} in the context of travelling wave solutions. A substitution was used in order to reduce the problem in two dependent variables to a problem in a single dependent variable.
This work was expanded upon in \cite{RN121}, where the existence of a strong solution to the one-dimensional time-dependent problem was proved on the domain \( \Omega = \mathbb{R}\). The proof relied on a substitution argument that is specific to the one-dimensional case. Although the regularity of the solutions is much stronger in \cite{RN121}, the authors were only able to prove local-in-time existence. Here, we shall prove global-in-time existence of weak solutions in any number of space dimensions.

The paper \cite{RN56}, with corresponding mathematical analysis contained in \cite{RN66} and \cite{RN48}, deals with a different generalisation of the Kelvin--Voigt model, where the stress is decomposed into an elastic component \( \bbT_e \) and a dissipative component \( \bbT_f \) associated with a viscous fluid. Linearising under the assumption (\ref{pp:equ51}), the constitutive relations are \( \bbT_e = h_1 (\beps) \) and \( \bbT_f = h_2 (\beps_t) \). However, a key point is that the functions \( h_1 \) and \( h_2 \) are not assumed to be bounded. In particular, the problem is not strain-limiting, and some of the key technical difficulties that we encounter here for the strain-limiting problem (\ref{pp:equ1}) are therefore absent.

We are now ready to proceed with our analysis of the strain-limiting problem (\ref{pp:equ1}). The paper is structured as follows. Section \ref{pp:secformulation} introduces the mathematical problem alongside the definition of a weak solution and the statement of the regularised problem, which forms the basis of the weak compactness argument that is at the heart of our proof of the existence of global-in-time large-data weak solutions. Furthermore, some useful auxiliary results that will be used in the proof are given. In Section \ref{pp:secregexist}, we focus on proving the existence of a solution to the regularised problem (\ref{pp:equ2}) by a Galerkin method using a basis of trigonometric polynomials.  We also prove various bounds on the sequence of solutions that are independent of the regularisation parameter. Certain bounds may only be proved under stronger restrictions on the data. In Section \ref{pp:seclimitn} we take the limit in the regularisation parameter and show that the accumulation point of the sequence of solutions is in fact the unique weak solution of (\ref{pp:equ1}).  A key difficulty is showing that the sequence of stress tensors converges pointwise a.e. and then using this to prove that the weak form of the PDE does indeed hold.  Then, under further restrictions on the dimension \( d\) and the parameter \( a\), we show that a stronger convergence result can in fact be proved for the sequence of stress tensors.  The main difficulty is showing that the sequence of approximate stress tensors is bounded in a reflexive Lebesgue space, from which we are able to deduce strong convergence in \( L^1((0, T) \times \Omega)\). Finally, in Section \ref{pp:secconclusion}, we discuss future work and related open problems.

\section{Formulation of problem and auxiliary results}\label{pp:secformulation}
Let \( \Omega := (0, 1)^d\subset\mathbb{R}^d \), \( d\geq 2\).
For a fixed final time \( T > 0 \), we define the time-space cylinder \( Q:= (0, T) \times \Omega\).
Given a  space \( \mathcal{F}\) of real-valued functions on \( \mathbb{R}^d \), let \( \mathcal{F}_\#\) denote the subspace of functions \( f\in \mathcal{F}\) that are \( 1\)-periodic, i.e., periodic with respect to each of the \( d\) co-ordinate directions. Let \( \mathcal{F}_*\) be the subspace of functions from \(  \mathcal{F}_\# \) whose integral over \( \Omega\) is equal to \( 0 \). For example, \( L^p_\#(\Omega) \) consists of \(1\)-periodic functions \(f\) such that \( |f|^p \) is integrable over \( \Omega\).
Let \( \|\cdot\|_p\) denote the usual norm on \( L^p(\Omega)\) for \( p \in [1,\infty ]\). When considering the norm in \( L^p(\Omega_0) \) where \( \Omega_0 \neq \Omega\), we will state it explicitly.
Furthermore, \( \mathcal{F}^d\) denotes the space of \( d\)-component vector functions such that each component is an element of \( \mathcal{F}\). The space \( \mathcal{F}^{d\times d}\) is defined analogously.
If \( \|\cdot\|_{\mathcal{F}}\) is the usual norm on \( \mathcal{F}\), the norms on \( \mathcal{F}^d\) and \( \mathcal{F}^{d\times d}\) will be given by
\(\|\cdot\|_{\mathcal{F}}:= \||\cdot |\|_{\mathcal{F}}\) where \( |\cdot| \) denotes the  absolute value on $\mathbb{R}$, the Euclidean norm on \( \mathbb{R}^d \), or the Frobenius norm on \( \mathbb{R}^{d\times d}\), as the case may be.
Let \( C^\infty_\#(\overline{\Omega})\) denote the set of smooth, real-valued functions on \( \mathbb{R}^d\) that are \( 1\)-periodic and \( C^\infty_*(\overline{\Omega}) \) the subspace of \( C^\infty_\#(\overline{\Omega})\) consisting of functions that have integral over \( \Omega\)  equal to \( 0 \). For \( p \in [1,\infty) \) and \( k \in \mathbb{N}\),  we define the Sobolev space \( W^{k,p}_\#(\Omega) \) to be the closure of \( C^\infty_\#(\overline{\Omega}) \) with respect to the norm
\begin{equation}\nonumber
\|f\|_{k,p} = \|f\|_{W^{k,p}(\Omega)} := \bigg( \sum_{i = 0}^k \sum_{|\alpha| = k} \|\partial^\alpha f\|_p^p \bigg)^{\frac{1}{p}},
\end{equation}
where \( \alpha\) is taken from the set of multi-indices in \( \mathbb{N}_0^d\).
The space \( W^{k,p}_*(\Omega) \) is defined analogously. Let \( W^{k,\infty}(\Omega)\) denote the set of functions \( f\in W^{k,1}(\Omega) \) such that \( \partial^\alpha f\in L^\infty(\Omega)\) for every \( \alpha\in \mathbb{N}_0^d\) such that \( |\alpha|\leq k \).
For a Banach space \( X\), we let \( X^\prime\) denote the dual space of \( X\) with duality pairing \(\langle\cdot,\cdot\rangle\).
Furthermore, we will use Einstein's summation convention throughout.

Given a vector field \( \boldf: Q\rightarrow\mathbb{R}^d\), initial data \( \bu_0 \), \( \bv_0:\Omega\rightarrow\mathbb{R}^d\), and model parameters $\alpha>0$, $a>0$, we seek a unique couple \( (\bu, \bbT): Q\rightarrow\mathbb{R}^d \times \mathbb{R}^{d\times d}\) such that
\begin{equation}\label{pp:equ1}
\begin{aligned}
\bu_{tt}&= \mathrm{div}(\bbT) + \boldf, &\quad &\text{ in }Q,
 \\
 \beps(\bu_t + \alpha\bu) &= \frac{\bbT}{(1 + |\bbT|^a)^{\frac{1}{a}}} =: F(\bbT), &\quad &\text{ in }Q,
 \\
 \bu(0, \cdot) &= \bu_0,  &\quad &\text{ in }\Omega,
 \\
 \bu_t(0, \cdot) &= \bv_0, &\quad &\text{ in }\Omega.
\end{aligned}
\end{equation}
We wish to prove the existence of a unique weak solution to (\ref{pp:equ1}) in the following sense. The choice of function spaces is the natural choice according to bounds that we derive in the proof of existence. For the sake of simplicity, from now on we will use the superscript \( \dot{}\) rather than the subscript~\( t\) in order to denote differentiation with respect to the time variable.

\begin{definition}\label{pp:def1}
Let \( \bu_0 \), \( \bv_0 \in L^2_*(\Omega)^d\) and \( \boldf\in L^2(0, T; L^2_*(\Omega)^d)\).
The couple \( (\bu, \bbT) \) is a weak solution of the strain-limiting problem (\ref{pp:equ1}) if
\begin{itemize}
\item \( \bu\), \( \dot{\bu}\in L^\infty(0, T; L^2_*(\Omega)^d) \),
\item \( \dot{\bu} + \alpha\bu \in L^p(0, T; W^{1,p}_*(\Omega)^d) \) for every \( p \in [1,\infty) \) with \( \beps(\dot{\bu}+\alpha\bu) \in L^\infty(Q)^{d\times d}\),
\item \( \ddot{\bu}\in L^2(0, T; L^2_*(\Omega)^d) \),
\item \( \bbT \in L^1(0, T; L^1_\#(\Omega)^{d\times d}) \),
\end{itemize}
and, for every \( \bv \in W^{1,2}_*(\Omega)^d \) such that \( \beps(\bv) \in L^\infty(\Omega)^{d\times d}\),
\begin{equation}\label{pp:equ3}
\int_\Omega\ddot{\bu}(t) \cdot \bv + \bbT(t) : \beps(\bv) \,\mathrm{d}x = \int_\Omega\boldf(t) \cdot \bv\,\mathrm{d}x,
\end{equation}
for a.e. \( t\in (0, T) \), where
\begin{equation}\label{pp:equ4}
\beps(\dot{\bu}+\alpha\bu) = F(\bbT) \quad\text{a.e. in }Q.
\end{equation}
Furthermore,  the initial conditions must hold in the following sense:
\begin{equation}\label{pp:equ5}
\lim_{t\rightarrow 0+ }\bigg( \|\bu(t) - \bu_0 \|_2 + \|\dot{\bu}(t) - \bv_0 \|_2 \bigg) = 0.
\end{equation}
\end{definition}
We note that \( \bu\), \( \dot{\bu}\in C([0, T]; L^2_*(\Omega)^d) \) by the regularity assumptions of Definition \ref{pp:def1}. (See \cite[Chapter~5]{evansPDE}, for example.) Thus (\ref{pp:equ5}) is well-defined.

The map \( F\) is a continuous, bounded and injective function.
We will also require the following properties of \( F\). The proof of these results can be found in \cite{RN8}.

\begin{lemma}\label{pp:lem1}
For any \( y \geq 0 \) and \( a> 0 \),
\begin{align*}
\min\{ 1,2^{-1+\frac{1}{a}}\} (1 + y) \leq (1 + y^a)^{\frac{1}{a}}\leq \max\{ 1,2^{-1+\frac{1}{a}}\} (1 + y).
\end{align*}
\end{lemma}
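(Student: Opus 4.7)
The plan is to recognise that the inequality is a restatement of the equivalence of the $\ell^1$ and $\ell^a$ norms on $\mathbb{R}^2$, evaluated at the point $(1,y)$, with sharp constants. Writing $v := (1,y)$, we have $(1+y^a)^{1/a} = \|v\|_a$ and $1+y = \|v\|_1$, so the claim reduces to exhibiting the best constants $C_-, C_+$ such that $C_- \|v\|_1 \leq \|v\|_a \leq C_+ \|v\|_1$ for all $v$ on the slice $v_1 = 1$, $v_2 \geq 0$. By homogeneity this is the same as comparing the two norms on all of $\mathbb{R}^2_{\geq 0}$, and it suffices to use two standard facts: (i) for $0 < p \leq q \leq \infty$, one has $\|v\|_q \leq \|v\|_p$ on any finite sequence; and (ii) for the normalised power mean $M_p(v) := \bigl(\tfrac{1}{2}(1 + y^p)\bigr)^{1/p}$, the map $p \mapsto M_p$ is non-decreasing (classical Jensen/power-mean inequality).

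I would then split into the two cases $a \geq 1$ and $0 < a \leq 1$. If $a \geq 1$, applying (i) with $p=1$, $q=a$ yields $(1+y^a)^{1/a} \leq 1+y$, which is the upper bound with constant $1 = \max\{1, 2^{-1+1/a}\}$. Applying (ii) with $p=1$, $q=a$ yields $(1+y)/2 \leq ((1+y^a)/2)^{1/a}$, i.e.\ $(1+y^a)^{1/a} \geq 2^{-1+1/a}(1+y)$, which is the lower bound with constant $2^{-1+1/a} = \min\{1, 2^{-1+1/a}\}$ since $2^{-1+1/a} \leq 1$ for $a \geq 1$. The case $0 < a \leq 1$ proceeds symmetrically: (i) with $p=a$, $q=1$ gives $1+y \leq (1+y^a)^{1/a}$, hence the lower bound with constant $1 = \min\{1, 2^{-1+1/a}\}$; and (ii) with $p=a$, $q=1$ gives $((1+y^a)/2)^{1/a} \leq (1+y)/2$, i.e.\ $(1+y^a)^{1/a} \leq 2^{-1+1/a}(1+y)$, the upper bound with $2^{-1+1/a} = \max\{1, 2^{-1+1/a}\}$ since $2^{-1+1/a} \geq 1$ for $a \leq 1$.

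There is no real obstacle here; the only care required is in tracking which of $1$ and $2^{-1+1/a}$ is the minimum and which is the maximum, which switches at $a = 1$ (where both constants collapse to $1$ and the inequality becomes an equality). As a sanity check, the two extremal cases are $y=0$ (or $y \to \infty$), giving ratio $(1+y^a)^{1/a}/(1+y) = 1$, and $y=1$, giving ratio $2^{1/a}/2 = 2^{-1+1/a}$; a one-line calculus argument on $g(y) := (1+y^a)^{1/a}/(1+y)$ shows these are the only critical values, confirming sharpness. Since the statement is elementary and already cited to \cite{RN8}, I would keep the presentation to the case split above rather than reproduce a longer derivation.
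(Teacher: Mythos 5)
Your argument is correct and complete: the identification of $(1+y^a)^{1/a}$ and $1+y$ with $\|(1,y)\|_a$ and $\|(1,y)\|_1$, combined with the monotonicity of $p\mapsto\|\cdot\|_p$ (valid for all $0<p\le q$, including the quasinorm range) and the power-mean inequality, yields exactly the stated constants, and your case bookkeeping at $a=1$ is right. The paper itself gives no proof of this lemma, deferring to \cite{RN8}, so there is nothing to compare against beyond noting that your derivation is the standard one and would serve as a self-contained substitute for the citation.
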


\begin{lemma}\label{pp:lem2}
Let \( a> 0 \). For any  \( \bbT \), \( \bbS \in \mathbb{R}^{d\times d}\),
\begin{align*}
(\bbT - \bbS):(F(\bbT) - F(\bbS)) \geq \max\{ 1, 2^{\frac{1}{a} - a}\}\cdot \frac{|\bbT - \bbS|^2}{(1 + |\bbT| + |\bbS|)^{1+a}}.
\end{align*}
\end{lemma}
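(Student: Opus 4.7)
The plan is to leverage the convex-potential structure $F = \alpha\nabla f_\alpha$ noted in the introduction (so that $DF$ is a symmetric positive-semidefinite Hessian) and integrate the resulting coercivity of $DF$ along the straight segment joining $\bbS$ to $\bbT$.

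First I would compute $DF$ by direct differentiation of $F(\bbT) = \bbT(1+|\bbT|^a)^{-1/a}$. Expanding the quadratic form $\bbH : DF(\bbT)\bbH$ and applying the Cauchy--Schwarz inequality $(\bbT:\bbH)^2 \leq |\bbT|^2 |\bbH|^2$ to the cross-term collapses it to the pointwise coercivity bound
\[
\bbH : DF(\bbT)\bbH \;\geq\; \frac{|\bbH|^2}{(1+|\bbT|^a)^{(1+a)/a}}.
\]
Lemma \ref{pp:lem1} applied with $y = |\bbT|$ and raised to the $(1+a)$-th power then replaces the denominator by an $a$-dependent multiple of $(1+|\bbT|)^{1+a}$.

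To conclude, I would parametrize $\bbT_\theta := \bbS + \theta(\bbT - \bbS)$ for $\theta \in [0,1]$ and apply the fundamental theorem of calculus along this segment,
\[
(\bbT - \bbS):(F(\bbT) - F(\bbS)) \;=\; \int_0^1 (\bbT - \bbS):DF(\bbT_\theta)(\bbT - \bbS)\dd\theta.
\]
The coercivity bound applied pointwise in $\theta$, combined with the triangle-inequality estimate $|\bbT_\theta| \leq |\bbS| + |\bbT|$, pulls a $\theta$-independent factor $(1+|\bbS|+|\bbT|)^{1+a}$ out of the denominator, and the trivial integration in $\theta$ then delivers the required inequality with an explicit positive $a$-dependent constant.

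The main delicacy is that $DF$ has an apparent singularity at $\bbT = \bzero$ when $a<2$ (because of the factor $|\bbT|^{a-2}$), even though the full quadratic form $\bbH:DF(\cdot)\bbH$ extends continuously to the origin with limiting value $|\bbH|^2$. Since a straight segment in $\mathbb{R}^{d\times d}$ meets $\bzero$ at most once, the fundamental theorem of calculus is justified either by the standard regularization $F_\varepsilon(\bbT) := \bbT(\varepsilon+|\bbT|^a)^{-1/a}$ combined with dominated convergence as $\varepsilon \to 0$, or by first proving the inequality for pairs $(\bbS,\bbT)$ whose segment avoids $\bzero$ and extending by continuity of both sides in $(\bbS,\bbT)$.
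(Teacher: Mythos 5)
Your argument is correct and is essentially the proof of this lemma: the paper itself defers to \cite{RN8}, and the proof there is exactly your computation of $DF$, the Cauchy--Schwarz collapse of the rank-one term giving $\bbH:DF(\bbT)\bbH\geq |\bbH|^2(1+|\bbT|^a)^{-(1+a)/a}$, and the fundamental theorem of calculus along the segment with $|\bbT_\theta|\leq|\bbS|+|\bbT|$. Two remarks. First, track the constant: Lemma \ref{pp:lem1} raised to the power $1+a$ gives $(1+y^a)^{(1+a)/a}\leq \max\{1,2^{\frac{1}{a}-a}\}(1+y)^{1+a}$, so your method yields the prefactor $\bigl(\max\{1,2^{\frac{1}{a}-a}\}\bigr)^{-1}=\min\{1,2^{a-\frac{1}{a}}\}$, \emph{not} $\max\{1,2^{\frac{1}{a}-a}\}$ as printed; the statement as written is in fact false for small $a$ (take $a=\tfrac12$, $\bbS=\bzero$, $|\bbT|=1$: the left-hand side is $\tfrac14$ while the stated right-hand side is $2^{3/2}\cdot 2^{-3/2}=1$), so you should present the inequality with the reciprocal constant and note the misprint rather than claim to reach the constant as stated. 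Second, the delicacy you flag at $\bbT=\bzero$ is not actually present: the off-diagonal part of $DF$ is $|\bbT|^{a-2}T_{ij}T_{kl}(1+|\bbT|^a)^{-1-\frac{1}{a}}$, which is bounded in norm by $|\bbT|^a\to 0$, so $DF$ extends continuously to the origin with value the identity and $F\in C^1(\mathbb{R}^{d\times d})$ for every $a>0$; the fundamental theorem of calculus applies directly on any segment and no regularization or continuity-extension step is needed.
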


From Lemma \ref{pp:lem2}, we deduce that \( F\) is a monotonic function.
However, it is not a bijection from \( \mathbb{R}^{d\times d}\) onto \( \mathbb{R}^{d\times d}\) since it is bounded. In order to consider finite-dimensional approximations, we would like \( F\) to be invertible on the whole of \( \mathbb{R}^{d\times d}\). Thus, in the spirit of \cite{RN9} and \cite{RN6}, for every \( n\in \mathbb{N}\) we  first consider the following regularised problem:
\begin{equation}\label{pp:equ2}
\begin{aligned}
\ddot{\bu}&= \mathrm{div}(\bbT) + \boldf, &\quad &\text{ in }Q,
 \\
 \beps(\dot{\bu} + \alpha\bu) &= \frac{\bbT}{(1 + |\bbT|^a)^{\frac{1}{a}}} + \frac{\bbT}{n( 1 + |\bbT|^{1-\frac{1}{n}})}=: F_n(\bbT), &\quad &\text{ in }Q,
 \\
 \bu(0, \cdot) &= \bu_0,  &\quad &\text{ in }\Omega,
 \\
 \dot{\bu}(0, \cdot) &= \bv_0, &\quad &\text{ in }\Omega.
\end{aligned}
\end{equation}
For the sake of simplicity, for the moment at least, we shall not indicate the dependence of \( \bu \) and \( \bbT \) on \( n\). However, we shall explicitly indicate the dependence of these functions on \( n \) in Section \ref{pp:seclimitn} where we let \( n\rightarrow\infty\).
To see that \( F_n \) is a bijection from \(\mathbb{R}^{d\times d}\) to itself, we use the Browder--Minty theorem noting the monotonicity result from Lemma \ref{pp:lem2}. Thus (\ref{pp:equ2})\textsubscript{2} is equivalent to
\[
\bbT = F_n^{-1}(\beps(\dot{\bu} + \alpha\bu) ), \quad \text{ in }Q.
\]
In analogy with Definition \ref{pp:def1}, we define a weak solution of (\ref{pp:equ2}) as follows.

\begin{definition}\label{pp:def2}
For a given \( n\in \mathbb{N}\) and \( \bu_0\), \( \bv_0 \), \( \boldf\) as in Definition \ref{pp:def2},  the couple \( (\bu, \bbT) \) is a weak solution of (\ref{pp:equ2}) if
\begin{itemize}
\item \( \bu\), \( \dot{\bu}\in L^\infty(0, T; L^2_*(\Omega)^d) \),
\item \( \dot{\bu}+\alpha\bu\in L^{n+1}(0, T; W^{1,n+1}_*(\Omega)^d) \) with \( \beps(\dot{\bu}+\alpha\bu) \in L^{n+1}(0, T; L^{n+1}_\#(\Omega)^{d\times d}) \),
\item \( \ddot{\bu}\in L^2(0, T; L^2_*(\Omega)^d) \),
\item \( \bbT \in L^{1 + \frac{1}{n}}(0, T; L^{1 + \frac{1}{n}}_\#(\Omega)^{d\times d}) \),
\end{itemize}
and, for every \( \bv\in W^{1,n+1}_*(\Omega)^d\),
\begin{equation}\label{pp:equ21}
\int_\Omega \ddot{\bu}(t) \cdot \bv + \bbT(t) : \beps(\bv) \,\mathrm{d}x = \int_\Omega \boldf(t) \cdot \bv\,\mathrm{d}x,
\end{equation}
for a.e. \( t\in (0, T) \) with
\begin{equation}\label{pp:equ22}
\beps(\dot{\bu}+\alpha\bu) = F_n(\bbT) \quad \text{a.e. in  }Q.
\end{equation}
Furthermore, the initial conditions must hold in the following sense:
\begin{equation}\label{pp:equ23}
\lim_{t\rightarrow 0+ }\bigg( \|\bu(t) - \bu_0 \|_2 + \|\dot{\bu}(t) - \bv_0 \|_2\bigg) = 0.
\end{equation}
\end{definition}

In order to prove the existence of a weak solution of (\ref{pp:equ1}), we begin by proving the existence of a weak solution of (\ref{pp:equ2}). We will require the following auxiliary results. For details of the proofs of Lemmas \ref{pp:lem3} and \ref{pp:lem4}, we refer to \cite{RN8}.

\begin{lemma}\label{pp:lem3}
Let \( \bv \in W^{1,2}_*(\Omega)^d\) be such that \( \beps(\bv) \in L^\infty_\#(\Omega)^d\). Then there exists an approximating sequence \( (\bv^n)_n \subset C^\infty_*(\overline{\Omega})^d\) such that
\begin{itemize}
\item \( \bv^n \rightarrow\bv \) strongly in \( L^2_*(\Omega)^d\), and
\item \( \beps({\bv}^n) \overset{\ast}{\rightharpoonup} \beps(\bv)\) weakly-* in \( L^\infty_\#(\Omega)^{d\times d}\).
\end{itemize}
\end{lemma}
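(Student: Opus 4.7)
The approach is a standard periodic mollification argument. I extend $\bv$ by periodicity to all of $\mathbb{R}^d$, take a standard symmetric mollifier $\eta_\varepsilon \in C^\infty_c(\mathbb{R}^d)$ supported in the ball $\{|x|\le\varepsilon\}$, and define $\bv^\varepsilon := \eta_\varepsilon * \bv$. Choosing any sequence $\varepsilon_n\downarrow 0$, I set $\bv^n := \bv^{\varepsilon_n}$.

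The first step is to check that $\bv^n \in C^\infty_*(\overline{\Omega})^d$. Smoothness is immediate from the smoothness of the kernel. The $1$-periodicity of $\bv^\varepsilon$ follows from a change of variables and the periodicity of $\bv$: $\bv^\varepsilon(x+e_i) = \int \eta_\varepsilon(x+e_i-y)\bv(y)\dd y = \int\eta_\varepsilon(x-z)\bv(z+e_i)\dd z = \bv^\varepsilon(x)$. The zero-mean property is preserved because Fubini together with translation-invariance of the integral of a periodic function over a period cell gives $\int_\Omega \bv^\varepsilon \dd x = \int \eta_\varepsilon(z)\bigl(\int_\Omega \bv(x-z)\dd x\bigr)\dd z = 0$. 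Strong convergence $\bv^n \to \bv$ in $L^2_*(\Omega)^d$ is then the standard $L^2$-mollification statement.

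The key remaining point is the weak-* convergence of the symmetric gradients. Since $\eta_\varepsilon$ is smooth, distributional differentiation commutes with convolution, and because $\bv \in W^{1,2}_*(\Omega)^d$, this yields $\beps(\bv^\varepsilon) = \eta_\varepsilon * \beps(\bv)$ pointwise in $\Omega$. As $\beps(\bv) \in L^\infty_\#(\Omega)^{d\times d}$, Young's convolution inequality delivers the uniform bound $\|\beps(\bv^\varepsilon)\|_\infty \le \|\beps(\bv)\|_\infty$, and the Lebesgue differentiation theorem gives the pointwise a.e.\ convergence $\eta_\varepsilon*\beps(\bv)\to\beps(\bv)$. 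To deduce weak-* convergence in $L^\infty_\#(\Omega)^{d\times d}$, fix an arbitrary $\phi \in L^1(\Omega)^{d\times d}$; the integrand $(\eta_\varepsilon*\beps(\bv) - \beps(\bv)):\phi$ tends to zero a.e.\ and is dominated by the integrable function $2\|\beps(\bv)\|_\infty|\phi|$, so dominated convergence closes the argument.

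There is no serious obstacle; the only subtlety is to ensure that mollification preserves both periodicity and the mean-zero constraint, which is precisely why one extends $\bv$ by periodicity and convolves over all of $\mathbb{R}^d$ rather than using a cut-off based approximation localised in $\Omega$. Note also that the hypothesis $\beps(\bv) \in L^\infty$ (as opposed to only $\nabla\bv \in L^2$) is used only in the last step, to obtain the uniform $L^\infty$ bound needed for the weak-* assertion.
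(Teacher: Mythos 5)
Your proof is correct and is essentially the standard argument for this lemma; the paper itself does not prove Lemma \ref{pp:lem3} but defers to the reference \cite{RN8}, where the approximating sequence is built in the same spirit, by convolution with an approximate identity that preserves periodicity, the zero mean, and the $L^\infty$ bound on the symmetric gradient. All of your steps --- the identity $\beps(\eta_\varepsilon*\bv)=\eta_\varepsilon*\beps(\bv)$, the uniform bound from Young's inequality, and the passage from a.e.\ convergence to weak-* convergence via dominated convergence against an arbitrary $L^1$ test function --- are sound.
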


\begin{lemma}[Korn's inequality in \( L^p\)]\label{pp:lem4}
Let \( p \in (1,\infty) \), \( d\geq 2\) and \(\Omega = (0, 1)^d\). There exists a positive constant \( c_p\) such that
\[
\| \bv\|_{1,p} \leq c_p \|\beps(\bv) \|_p \quad \forall \bv\in W^{1,p}_*(\Omega)^d.
\]
\end{lemma}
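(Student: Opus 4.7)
The plan is to prove this Korn-type inequality on the periodic cell by reducing it to an $L^p$-boundedness statement for certain Fourier multipliers, and then invoking classical Calderón--Zygmund (Mikhlin) multiplier theory on $\mathbb{T}^d$. Since $\bv \in W^{1,p}_*(\Omega)^d$ has zero mean, Poincaré's inequality on the torus gives $\|\bv\|_p \leq C_p \|\nabla \bv\|_p$, so it suffices to control $\|\nabla \bv\|_p$ by $\|\beps(\bv)\|_p$.

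The first key step is an algebraic identity. Writing $\omega_{jl} := \partial_j v_l - \partial_l v_j$ for the antisymmetric part of the gradient, one has $\partial_j v_l = \beps(\bv)_{jl} + \tfrac{1}{2}\omega_{jl}$, so it is enough to estimate $\|\omega\|_p$ by $\|\beps(\bv)\|_p$. A direct computation yields
\begin{equation*}
\partial_k \omega_{jl} = 2\bigl(\partial_j \beps(\bv)_{kl} - \partial_l \beps(\bv)_{kj}\bigr),
\end{equation*}
and, taking a further divergence in $k$,
\begin{equation*}
\Delta \omega_{jl} = 2\bigl(\partial_j \partial_k \beps(\bv)_{kl} - \partial_l \partial_k \beps(\bv)_{kj}\bigr).
\end{equation*}
Because $\bv$ has zero mean, so do $\omega$ and $\beps(\bv)$, so the Laplacian is invertible on the orthogonal complement of the constants on the torus. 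Applying $\Delta^{-1}$ gives
\begin{equation*}
\omega_{jl} = 2\bigl(\partial_j \Delta^{-1}\partial_k \beps(\bv)_{kl} - \partial_l \Delta^{-1}\partial_k \beps(\bv)_{kj}\bigr).
\end{equation*}

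The second step is to note that each operator $\partial_i \Delta^{-1}\partial_j$ is a Fourier multiplier on $\mathbb{T}^d$ with symbol $k_i k_j / |k|^2$ (on non-zero Fourier modes, acting as zero on the zero mode). These symbols are smooth away from the origin and homogeneous of degree $0$, so the periodic Mikhlin multiplier theorem (equivalently, Calderón--Zygmund theory applied to the composition of two Riesz transforms, transferred to the torus) implies that $\partial_i \Delta^{-1}\partial_j$ is bounded on $L^p_*(\Omega)$ for every $p \in (1,\infty)$. Consequently $\|\omega\|_p \leq C_p \|\beps(\bv)\|_p$, which together with the decomposition of $\nabla \bv$ above and Poincaré gives the claim. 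This approach yields the constant $c_p$ depending only on $p$ and $d$.

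The main obstacle is a technical one rather than a conceptual one: one must check that the Fourier-multiplier manipulations above are legitimate for a function that is only in $W^{1,p}$, not smooth. The standard workaround is a density argument — first establish the inequality for $\bv \in C^\infty_*(\overline{\Omega})^d$ (where the Fourier series is absolutely convergent and the identities above are unambiguous), and then extend by density, using that $C^\infty_*(\overline{\Omega})^d$ is dense in $W^{1,p}_*(\Omega)^d$ by the very definition of the latter space in the excerpt. One could alternatively cite this statement as a standard fact — as the authors indicate they do, referring to \cite{RN8} — but the sketch above is the natural self-contained proof in the periodic setting, and avoids the more delicate arguments (Ne\v{c}as' lemma, Bogovski\u{\i}-type constructions) needed on general Lipschitz domains.
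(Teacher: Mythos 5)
Your proof is correct. The paper itself offers no argument for this lemma — it simply defers to \cite{RN8} — and the Fourier-multiplier route you describe (the algebraic identity expressing $\Delta\omega_{jl}$ in terms of second derivatives of $\beps(\bv)$, inversion of the Laplacian on mean-zero periodic fields, $L^p$-boundedness of the symbols $k_ik_j/|k|^2$ via Mikhlin/transference, and a density argument from $C^\infty_*(\overline{\Omega})^d$) is precisely the standard proof in the periodic setting and is the one used in the cited reference; all the algebraic identities you state check out, and the observation that $\omega$ and $\beps(\bv)$ are automatically mean-free (being derivatives of periodic functions) is what makes $\Delta^{-1}$ legitimate.
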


The following result is from \cite{RN114} and is needed to show that the function \( F_n \) defined in the next section is a \( C^1\)-diffeomorphism.

\begin{lemma}\label{pp:lem6}
A \( C^1\)-map \( f:\mathbb{R}^l\rightarrow\mathbb{R}^l \) is a \( C^1\)-diffeomorphism if and only if the Jacobian \( \mathrm{det}(Df)\) never vanishes and \( |f(\bv) |\rightarrow\infty \) whenever \( |\bv|\rightarrow\infty \).
\end{lemma}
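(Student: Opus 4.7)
The plan is to handle the two implications separately. The forward direction is routine: if $f$ is a $C^1$-diffeomorphism with inverse $g$, then differentiating $g\circ f = \mathrm{id}$ via the chain rule yields $Dg(f(\bv))\,Df(\bv)=I$, so $\det(Df)$ cannot vanish; and continuity of $g$ forces preimages of compact sets to be compact, which is equivalent to the condition $|f(\bv)|\to\infty$ as $|\bv|\to\infty$.

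For the converse, assuming the nonvanishing-Jacobian and growth hypotheses, I would establish in order that $f$ is (i) a local $C^1$-diffeomorphism (hence an open map), (ii) surjective, and (iii) injective. Step (i) is immediate from the inverse function theorem. For (ii) I would first note that the growth hypothesis is exactly the statement that $f$ is proper (preimages of compact sets are compact), then invoke the standard sequential-compactness argument that a continuous proper map between locally compact Hausdorff spaces is closed: if $y_n = f(x_n)\to y$, the sequence $\{x_n\}$ lies in the compact set $f^{-1}(\{y_n\}\cup\{y\})$, so some subsequence converges and $y$ lies in the image of any closed set containing the $x_n$. Consequently $f(\mathbb{R}^l)$ is simultaneously nonempty, open and closed in the connected space $\mathbb{R}^l$, forcing $f(\mathbb{R}^l)=\mathbb{R}^l$.

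The crux is step (iii). First, each fiber $f^{-1}(\{y\})$ is discrete by local injectivity and compact by properness, hence finite, say $\{x_1,\dots,x_k\}$. To upgrade to a covering map I would use the inverse function theorem at each $x_j$ to obtain pairwise disjoint open neighborhoods $U_j$ on which $f$ restricts to a diffeomorphism onto a common neighborhood $W$ of $y$; using closedness of $f$, the set $V := W\setminus f\bigl(\mathbb{R}^l\setminus \bigcup_j U_j\bigr)$ is then an open neighborhood of $y$ evenly covered by the disjoint open sets $f^{-1}(V)\cap U_j$, $j=1,\dots,k$. Thus $f$ is a covering map of $\mathbb{R}^l$; since the base is simply connected and the total space is connected, the covering has exactly one sheet, giving injectivity. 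Finally, the inverse function theorem promotes $f^{-1}$ to a $C^1$-map, completing the argument.

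The principal obstacle is the covering-map step: combining properness (which yields finite fibers and closedness of $f$) with the local diffeomorphism structure to produce evenly covered neighborhoods, after which simple connectedness of $\mathbb{R}^l$ does the remaining work. The other steps are routine consequences of the inverse function theorem together with the connectedness of Euclidean space.
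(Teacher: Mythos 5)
Your argument is correct and complete. Note, however, that the paper does not prove this lemma at all: it is Hadamard's global inverse function theorem, quoted verbatim from the cited reference of Ruzhansky and Sugimoto, so there is no in-paper proof to compare against. What you have written is the classical proof of that theorem: the forward direction via the chain rule and the equivalence of properness with the norm-coercivity condition for maps of $\mathbb{R}^l$, and the converse via the chain (local diffeomorphism) $\Rightarrow$ (open image), (proper) $\Rightarrow$ (closed map) $\Rightarrow$ (surjectivity by connectedness), followed by the upgrade to a covering map using finiteness of fibers and closedness of $f$ to manufacture evenly covered neighborhoods, and finally simple connectedness of $\mathbb{R}^l$ to force a single sheet. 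All the individual steps check out: the fibers are indeed finite (discrete by local injectivity, compact by properness), the set $V = W\setminus f\bigl(\mathbb{R}^l\setminus\bigcup_j U_j\bigr)$ is open precisely because a proper map into a locally compact Hausdorff space is closed, and $f^{-1}(V)\subseteq\bigcup_j U_j$ by construction, so $V$ is evenly covered. The only cosmetic remark is that you should observe $k\geq 1$ (i.e., fibers are nonempty) before invoking the covering-space classification, but this is exactly what your surjectivity step (ii) provides. Your write-up is therefore a self-contained substitute for the external citation rather than an alternative to an argument given in the paper.
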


\section{Existence of a solution to the approximate problem}\label{pp:secregexist}
First we will show that there exists a unique weak solution of (\ref{pp:equ2}). From this proof, we will also obtain \( n \)-independent bounds on the solution. These will be used when we consider the limit as \( n\rightarrow\infty \).

\begin{theorem}\label{pp:thm1}
Let \( n \in\mathbb{N}\), \( \alpha > 0 \) and $a>0$. Suppose that \( \bu_0\), \( \bv_0 \in L^2_*(\Omega)^d\) are such that \( \bv_0 + \alpha\bu_0 \in W^{k+1,2}_*(\Omega)^d \) for some \( k > \frac{d}{2}\) with
\begin{equation}\label{pp:equ17}
\|\beps(\bv_0 + \alpha\bu_0 ) \|_\infty \leq C_* < 1,
\end{equation}
for a constant \( C_*\in (0, 1) \). Furthermore, let \( \boldf\in L^2(0, T; L^2_*(\Omega)^{d}) \) be given.
Then, there exists a unique weak solution \((\bu, \bbT) \) of the regularised problem (\ref{pp:equ2}) in the sense of Definition \ref{pp:def2}. In addition, the following bound holds:
\begin{equation}\label{pp:equ27}
\begin{aligned}
&\sup_{t\in [0, T]}\|\bu(t) \|_2  + \sup_{t\in [0, T]}\|\dot{\bu}(t) \|_2 + \|\beps(\dot{\bu}+\alpha\bu) \|_{L^{n+1}(Q)} + \int_Q |\bbT|\,\mathrm{d}x\,\mathrm{d}t
\\
&\quad\quad
+ \sup_{t\in [0, T]}\bigg( \int_\Omega |\bbT(t) |^{1-a}\chi_{\{|\bbT(t)|\geq 1\}}\bigg) \leq C,
\end{aligned}
\end{equation}
where \( C\) is a positive constant that is independent of \(n\) and \( \chi_A\) is the indicator function of any measurable set \( A\).
\end{theorem}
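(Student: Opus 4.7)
The plan is to invert the constitutive relation, reformulate (\ref{pp:equ2}) as a single quasilinear second-order evolution equation for $\bu$, and construct a solution via a Galerkin scheme based on the trigonometric basis of $L^2_*(\Omega)^d$. The strict monotonicity of $F_n$---inherited from Lemma \ref{pp:lem2} for the first summand and from the structure of the regularising second summand---implies, via Lemma \ref{pp:lem6}, that $F_n\colon\mathbb{R}^{d\times d}\to\mathbb{R}^{d\times d}$ is a $C^1$-diffeomorphism, so (\ref{pp:equ2})\textsubscript{2} is equivalent to $\bbT=F_n^{-1}(\beps(\dot{\bu}+\alpha\bu))$. Substitution into (\ref{pp:equ2})\textsubscript{1} yields a single PDE whose spatial part is a monotone, coercive operator. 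Letting $V^N\subset C^\infty_*(\overline{\Omega})^d$ denote the span of the first $N$ vector Fourier modes, the projected problem is a second-order ODE system in the Fourier coefficients with continuous right-hand side, to which Carath\'eodory's theorem applies and yields a local-in-time classical solution $\bu^N$ that will be extended globally by the bounds below.

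The principal energy estimate is obtained by testing the Galerkin equation with $\dot{\bu}^N+\alpha\bu^N$, producing
\begin{equation*}
\frac{\mathrm{d}}{\mathrm{d}t}\!\left[\tfrac{1}{2}\|\dot{\bu}^N(t)\|_2^2+\alpha\!\int_\Omega\dot{\bu}^N\!\cdot\!\bu^N\,\mathrm{d}x\right]-\alpha\|\dot{\bu}^N(t)\|_2^2+\int_\Omega \bbT^N\!:\!F_n(\bbT^N)\,\mathrm{d}x=\int_\Omega \boldf(t)\cdot(\dot{\bu}^N+\alpha\bu^N)\,\mathrm{d}x.
\end{equation*}
Lemma \ref{pp:lem1} gives the pointwise lower bound $\bbT\!:\!F_n(\bbT)\geq c\bigl(|\bbT|\chi_{\{|\bbT|\geq 1\}}+n^{-1}|\bbT|^{1+1/n}\chi_{\{|\bbT|\geq 1\}}+|\bbT|^2\chi_{\{|\bbT|<1\}}\bigr)$, and a Gr\"onwall argument then delivers $\bu^N,\dot{\bu}^N\in L^\infty(0,T;L^2_*(\Omega)^d)$ and $\bbT^N\in L^1(Q)^{d\times d}$ uniformly in $N$ and $n$, together with an $n$-dependent bound on $\bbT^N$ in $L^{1+1/n}(Q)$. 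Combining the latter with the pointwise inequality $|F_n(\bbT)|\leq 1+n^{-1}|\bbT|^{1/n}$ converts to $\|\beps(\dot{\bu}^N+\alpha\bu^N)\|_{L^{n+1}(Q)}\leq C$ (the $n$-dependent prefactors conveniently cancel); Korn's inequality (Lemma \ref{pp:lem4}) then promotes this to a $W^{1,n+1}_*$-bound on $\dot{\bu}^N+\alpha\bu^N$, and the $L^2(0,T;L^2_*)$-bound on $\ddot{\bu}^N$ follows by testing the Galerkin momentum equation directly with $\ddot{\bu}^N$ and using the available control of $\bbT^N$.

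The remaining estimate $\sup_t\!\int_\Omega|\bbT^N(t)|^{1-a}\chi_{\{|\bbT^N(t)|\geq 1\}}\,\mathrm{d}x\leq C$ is the principal technical obstacle. The strategy is to reproduce the convex-conjugate computation of the introduction, now with $F$ replaced by $F_n$, obtaining an identity of the form
\begin{equation*}
\tfrac{1}{2}\|\dot{\bu}^N(t)\|_2^2+\int_\Omega f_{n,\alpha}^*(\beps(\bu^N(t)))\,\mathrm{d}x+\int_0^t\!\int_\Omega(\bbT^N-\bbT_0^N)\!:\!(F_n(\bbT^N)-F_n(\bbT_0^N))\,\mathrm{d}x\,\mathrm{d}s=\mathrm{data},
\end{equation*}
where $\bbT_0^N=(F_n/\alpha)^{-1}(\beps(\bu^N))$ and $f_{n,\alpha}^*$ is the Legendre transform of the potential of $F_n/\alpha$. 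Asymptotic expansion of the density $r/(1+r^a)^{1/a}$ for large $r$ shows that $f_{n,\alpha}^*(\beps(\bu^N))\gtrsim |\bbT_0^N|^{1-a}$ on $\{|\bbT_0^N|\geq 1\}$, producing the desired pointwise-in-time control of $\bbT_0^N$; the dissipation term, reinterpreted via Lemma \ref{pp:lem2}, then transfers the estimate from $\bbT_0^N$ to $\bbT^N$ itself. Closing the argument requires the hypothesis $\bv_0+\alpha\bu_0\in W^{k+1,2}_*$ with $k>d/2$, whose Sobolev embedding into $W^{1,\infty}$ bounds the right-hand side of the identity, and the smallness condition (\ref{pp:equ17}), which keeps the argument of $f_{n,\alpha}^*$ strictly inside its domain of finiteness at the initial instant. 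The hyperbolic character of the momentum equation prevents any parabolic smoothing, so the bound must be propagated from the initial datum rather than created in time.

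With the $N$-uniform bounds in hand, weak-$\ast$ compactness extracts limits $\bu^N\overset{\ast}{\rightharpoonup}\bu$, $\ddot{\bu}^N\rightharpoonup \ddot{\bu}$, $\bbT^N\rightharpoonup \bbT$ and $\beps(\dot{\bu}^N+\alpha\bu^N)\rightharpoonup \beps(\dot{\bu}+\alpha\bu)$ in the spaces of Definition \ref{pp:def2}. The Aubin--Lions lemma provides strong convergence of $\bu^N$ and $\dot{\bu}^N$ in $L^p(0,T;L^2_*)$ for every finite $p$, and density of $\bigcup_N V^N$ in $W^{1,n+1}_*(\Omega)^d$ combined with these convergences allows passage to the limit in the weak form (\ref{pp:equ21}). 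The nonlinear identification $\bbT=F_n^{-1}(\beps(\dot{\bu}+\alpha\bu))$ is then recovered by Minty's monotonicity trick, exploiting strict monotonicity of $F_n^{-1}$. Continuity of $\bu$ and $\dot{\bu}$ in $L^2_*$ (a consequence of the velocity and acceleration bounds) makes (\ref{pp:equ23}) meaningful, and uniqueness follows by subtracting two solutions, testing with the difference of velocities, and applying Lemma \ref{pp:lem2} together with Gr\"onwall's inequality.
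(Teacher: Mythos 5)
Your scaffolding (Galerkin scheme in the trigonometric basis, Carath\'eodory for local existence, the energy estimate obtained by testing with $\dot{\bu}^N+\alpha\bu^N$, Minty's trick for the constitutive relation, Aubin--Lions for the initial data, and the uniqueness argument) matches the paper. The gaps are in the two higher-order estimates. First, the bound $\ddot{\bu}^N\in L^2(Q)$ does \emph{not} follow from ``testing with $\ddot{\bu}^N$ and using the available control of $\bbT^N$'': the resulting cross term $\int_Q\bbT^N:\beps(\ddot{\bu}^N)$ cannot be absorbed, since $\bbT^N$ is only uniformly in $L^1(Q)$ and $\beps(\ddot{\bu}^N)$ lies in no useful dual space. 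The admissible test function is $\ddot{\bu}^N+\alpha\dot{\bu}^N$, for which $\beps$ of the test function equals $\partial_t F_n(\bbT^N)$ exactly, so that $\bbT^N:\beps(\ddot{\bu}^N+\alpha\dot{\bu}^N)=\partial_t\bigl(\tfrac12 h_n(|\bbT^N|^2)\bigr)$ is an exact time derivative of a potential $h_n$ growing like $s^{(1-a)/2}$; this single computation is what the paper uses to obtain \emph{both} the acceleration bound and the $\sup_t\int_\Omega|\bbT^N(t)|^{1-a}\chi_{\{|\bbT^N(t)|\geq1\}}$ bound simultaneously.

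Second, your convex-conjugate route to the $\sup_t$ stress bound has two independent defects. (i) The identity obtained by testing with $\dot{\bu}^N$ controls $\sup_t\int_\Omega f^*_{n,\alpha}(\beps(\bu^N(t)))\sim\sup_t\int_\Omega|\bbT_0^N(t)|^{1-a}$, where $\bbT_0^N=F_{n,\alpha}^{-1}(\beps(\bu^N))$, and the proposed ``transfer'' to $\bbT^N$ via the dissipation term cannot work: Lemma \ref{pp:lem2} applied to that term only controls $\int_0^t\!\int_\Omega|\bbT^N-\bbT_0^N|^2(1+|\bbT^N|+|\bbT_0^N|)^{-1-a}$, a space--\emph{time} integral, which can never upgrade an estimate on $\bbT_0^N(t)$ to one on $\bbT^N(t)$ uniformly in $t$. (ii) The right-hand side of your identity contains $\int_\Omega f^*_{n,\alpha}(\beps(\bu^N(0)))\,\mathrm{d}x$ with $\bu^N(0)=P^N\bu_0$; keeping this finite uniformly in $n$ essentially requires $\|\beps(\bu_0)\|_\infty<\alpha^{-1}$, which is \emph{not} a hypothesis of the theorem --- $\bu_0$ is only assumed to lie in $L^2_*(\Omega)^d$, and the smallness condition (\ref{pp:equ17}) constrains $\beps(\bv_0+\alpha\bu_0)$, not $\beps(\bu_0)$. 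The paper's test function $\ddot{\bu}^N+\alpha\dot{\bu}^N$ is designed precisely so that the initial-data term is $h_n(|\bbT^N(0)|^2)$ with $\bbT^N(0)=F_n^{-1}(\beps(P^N(\bv_0+\alpha\bu_0)))$, which is exactly the quantity that (\ref{pp:equ17}), combined with the $W^{k+1,2}\hookrightarrow W^{1,\infty}$ embedding applied to $P^N(\bv_0+\alpha\bu_0)$, keeps bounded uniformly in $N$ and $n$. As written, your argument therefore proves the key estimate under hypotheses different from (and not implied by) those of the theorem, and even then only for $\bbT_0^N$ rather than $\bbT^N$.
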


The regularity requirements on the initial data are higher than one might expect. It is for a technical reason that we  demand  \( \bv_0 + \alpha\bu_0 \in W^{k+1,2}_*(\Omega)^d \) for a \( k > \frac{d}{2}\). In particular, by the Sobolev embedding theorem \(  W^{k+1,2}_*(\Omega)\) is continuously embedded into \( W^{1,\infty}_*(\Omega) \) . This will allow us to deduce the strong convergence in \( W^{1,\infty}_*(\Omega) \) of a certain sequence of approximations of \( \bv_0 + \alpha\bu_0 \).  However, the demand for \( \beps(\bv_0 + \alpha\bu_0 ) \) to be bounded in \( L^\infty(\Omega)^{d\times d}\) is natural since we eventually want to take the limit as \( n \rightarrow\infty \) and thus we would like \( F^{-1}(\beps(\bv_0 + \alpha\bu_0 ) ) \) to be well-defined.
The weakening of the conditions on \( \bv_0 + \alpha\bu_0 \) will be discussed in the first remark in Section \ref{pp:seclimitn}.

\begin{proof}

We will construct a weak solution of (\ref{pp:equ2}) by use of the Galerkin approximation method. Let \( (\phi_i)_{i=1}^\infty\) be a sequence of trigonometric polynomials from \( C^\infty_*(\overline{\Omega}) \) such that they form an orthonormal basis of \( L^2_*(\Omega) \) and for every \( m\in\mathbb{N}\) there exists an \( M_m\in \mathbb{N}\) such that the linear span of \( (\phi_i)_{i=1}^{M_m} \) is  the vector space of trigonometric polynomials of degree at most \(m \) with integral over \( \Omega\) equal to \( 0 \). Let \( V_m = (\mathrm{span}\{ \phi_1,...,\phi_{M_m}\})^d\). We note that a basis of \( V_m \) that is orthogonal with respect to the inner product in \( L^2(\Omega)^d\) is \( (\phi_i\be_j )_{i,j=1}^{M_, d}\) where \( \be_j \) is the \(j\)-th standard basis vector in \( \mathbb{R}^d\).

For each \(m \in \mathbb{N}\), we want to find a function \( \bu^m\in W^{2,2}([0, T]; L^2_*(\Omega)^d) \) of the form
\[
\bu^m(t,x) = \sum_{i=1}^{M_m}\sum_{j=1}^d \beta^m_{i,j}(t) \phi_i(x)\be_j,
\]
such that, for every \( k \in \{ 1,\dots, M_m\} \) and \( l\in \{ 1,\dots , d\} \), we have
\begin{equation}\label{pp:equ6}
\int_\Omega \ddot{\bu}^m(t) \cdot (\phi_k\be_l ) + \bbT^m(t):\beps(\phi_k\be_l) \,\mathrm{d}x = \int_\Omega \boldf(t) \cdot (\phi_k\be_l) \,\mathrm{d}x,
\end{equation}
for every \( t\in (0, T) \) with \( \bbT^m\) is defined by
\begin{equation}\label{pp:equ13}
\beps(\dot{\bu}^m + \alpha\bu^m) = F_n(\bbT^m) \quad\text{ a.e. in }Q.
\end{equation}
The initial conditions are
\begin{align*}
\bu^m(0) = P^m\bu_0, \quad \dot{\bu}^m(0) = P^m \bv_0,
\end{align*}
where \( P^m \) is the orthogonal projection operator from \( L^2_\#(\Omega)^d\) to the space of \( \mathbb{R}^d\)-valued trigonometric polynomials of degree at most \( m \). We note that the restriction of \( P^m \) to \( L^2_*(\Omega)^d\) coincides with the orthogonal projection operator from \( L^2_*(\Omega)^d\) to \( V_m \).
We will refer to this finite-dimensional problem as the Galerkin approximation from \( V_m\).

For every \( \bv\in L^2_*(\Omega)^d\) we may identify \( P^m\bv\) with a vector \((v_{ij})_{i,j=1}^{M_m,d}\) such that
\[
P^m\bv = \sum_{i=1}^{M_m}\sum_{j=1}^d v_{ij}\phi_i \be_j.
\]
With this in mind,  the Galerkin approximation from \( V_m \) can be rewritten as the following system of first order ODEs:
\begin{equation}\label{pp:equ7}
\begin{aligned}
(\dot{\betab}^m(t), \dot{\gammab}^m(t) ) & = (\gammab^m(t), \mathbf{g}(t, \betab^m(t), \gammab^m(t))),
\\
\betab^m(0) &= P^m\bu_0,
\\
\gammab^m(0) &= P^m\bv_0,
\end{aligned}
\end{equation}
where the function \( \mathbf{g} = (g_{kl})_{k,l=1}^{M_m,d}\) is defined by
\begin{equation}\nonumber
g_{kl}(t, \betab,\gammab) = -\int_\Omega F_n^{-1}\bigg( \beps\Big( \sum_{i=1}^{M_m}\sum_{j=1}^d (\gamma_{ij} + \alpha\beta_{ij}) \phi_i \be_j \Big) \bigg): \beps(\phi_k\be_l) \,\mathrm{d}x + \int_\Omega \boldf(t)\cdot (\phi_k \be_l) \,\mathrm{d}x.
\end{equation}
The first term is independent of \( t \) and is  continuous in \( (\betab,\gammab) \) by the continuity of \( F_n^{-1}\).
The second term is independent of \(( \betab,\gammab) \) and is measurable with respect to  \( t\) thanks to the assumptions on \( \boldf\). Thus we may apply standard Carath\'{e}odory theory, for example Theorem 2.4.1 from \cite{optimalcontrol}, to deduce that a solution exists to (\ref{pp:equ7}) on \([0, T_*) \) for some positive  \( T_*\leq T \) that may depend on \(m \) and \( n \).
Thus a solution exists to the Galerkin approximation from \( V_m \) on a, possibly  small,  time interval  \( [0, T_*) \). In order to extend the existence result to \([0, T]\) we will deduce an energy inequality and combine this with the fact that
\begin{equation}\label{pp:equ8}
\|\bu^m(t) \|_2^2 + \|\dot{\bu}^m(t) \|_2^2 = \sum_{i=1}^{M_m}\sum_{j=1}^d \Big( |\beta^m_{ij}(t)|^2 + |\dot{\beta}^m_{ij}(t) |^2 \Big),
\end{equation}
which follows from the orthonormality of the basis. We note that  \( \boldf\) can be extended to the interval \( (0 , T+ {\epsilon}) \) and the above reasoning shows that the solution can then be extended onto the interval \( (0, T+ {\epsilon}) \). This fact is need in the proof of Theorem \ref{pp:thm2}.

We multiply (\ref{pp:equ6}) by \( (\dot{\beta}^m_{kl}+\alpha\beta^m_{kl})(t)\) and sum over \( k\in \{ 1,\dots,M_m\} \) and \( l\in \{ 1,\dots,d\} \) to deduce that
\begin{equation}\nonumber
\begin{aligned}
0
&=
\int_\Omega \ddot{\bu}^m \cdot(\dot{\bu}^m + \alpha\bu^m) + \bbT^m:\beps(\dot{\bu}^m + \alpha\bu^m) - \boldf\cdot (\dot{\bu}^m + \alpha\bu^m) \,\mathrm{d}x
\\
&= \int_\Omega \frac{\partial}{\partial t}\Big( \frac{|\dot{\bu}^m|^2}{2} + \alpha\dot{\bu}^m \cdot \bu^m \Big) - \alpha|\dot{\bu}^m |^2 + \bbT^m: F_n(\bbT^m)  - \boldf\cdot (\dot{\bu}^m + \alpha\bu^m) \,\mathrm{d}x.
\end{aligned}
\end{equation}
For arbitrary \( t\in (0, T_*) \) we integrate over \( (0, t) \) to find that
\begin{equation}\label{pp:equ9}
\begin{aligned}
&\frac{\|\dot{\bu}^m (t) \|_2^2 }{2} + \int_0^t\int_\Omega \frac{|\bbT^m|^2 }{(1 + |\bbT^m|^a)^{\frac{1}{a}}} + \frac{|\bbT^m|^2}{n( 1 + |\bbT|^{1-\frac{1}{n}})}\,\mathrm{d}x\,\mathrm{d}s
\\
&\quad\quad
= \frac{\|\dot{\bu}^m(0)\|_2^2}{2} + \alpha\int_\Omega\dot{\bu}^m(0) \cdot \bu^m(0) - \dot{\bu}^m(t) \cdot \bu^m(t) \,\mathrm{d}x + \alpha\int_0^t\int_\Omega |\dot{\bu}^m|^2\,\mathrm{d}x\,\mathrm{d}s
\\
&\quad\quad\quad\quad
+ \int_0^t \int_\Omega \boldf \cdot (\dot{\bu}^m + \alpha\bu^m) \,\mathrm{d}x\,\mathrm{d}s
\\
&\quad\quad
\leq \Big( \frac{\alpha+1}{2}\Big) \|P^m\bv_0 \|_2^2 + \Big( \frac{\alpha}{2}+ 2\alpha^2 \Big) \|P^m\bu_0 \|_2^2 + (2\alpha^2 + \alpha + 1) \int_0^t\int_\Omega |\dot{\bu}^m|^2 \,\mathrm{d}x\,\mathrm{d}s
\\&\quad\quad\quad\quad
+ \frac{\|\dot{\bu}^m(t) \|_2^2}{4} + \alpha^2 \int_0^t\int_\Omega |\bu^m(t) |^2\,\mathrm{d}x\,\mathrm{d}s + \frac{1}{2}\int_0^t\int_\Omega|\boldf|^2\,\mathrm{d}x\,\mathrm{ds}.
\end{aligned}
\end{equation}
However, we have that
\begin{equation}\label{pp:equ10}
\bu^m(t) = P^m\bu_0 + \int_0^t \dot{\bu}^m(s) \,\mathrm{d}s,
\end{equation}
for every \( t\in (0, T_*) \). Thus we get
\begin{align*}
\int_0^t\int_\Omega |\bu^m|^2\,\mathrm{d}x\,\mathrm{d}s
&\leq
2\int_0^t\int_\Omega |P^m\bu_0|^2\,\mathrm{d}x\,\mathrm{d}s
+ 2\int_0^t\int_\Omega \Big|\int_0^s \dot{\bu}^m(\tau, x)\,\mathrm{d}\tau \Big|^2 \,\mathrm{d}x\,\mathrm{d}s
\\
&\leq
2t\|P^m\bu_0 \|_2^2 + 2\int_0^t\int_\Omega\Big( s\int_0^s|\dot{\bu}^m(\tau)|^2\,\mathrm{d}\tau\Big)\,\mathrm{d}x\,\mathrm{d}s
\\
&\leq
2t\|P^m\bu_0 \|_2^2 + 2t^2 \int_0^t \int_\Omega |\dot{\bu}^m(s) |^2 \,\mathrm{d}x\,\mathrm{d}s.
\end{align*}
Substituting this into the right-hand side of (\ref{pp:equ9}), we deduce that
\begin{align*}
&\frac{\|\dot{\bu}^m(t) \|_2^2 }{4} + \int_0^t\int_\Omega \frac{|\bbT^m|^2 }{(1 + |\bbT^m|^a)^{\frac{1}{a}}} + \frac{|\bbT^m|^{2}}{n( 1 + |\bbT|^{1-\frac{1}{n}})}\,\mathrm{d}x\,\mathrm{d}s
\\
&\quad\quad
\leq
\Big( \frac{\alpha+1}{2}\Big) \|P^m\bv_0 \|_2^2 + \Big( \frac{\alpha}{2} + 2\alpha^2 + 2T_*\Big) \|P^m\bu_0\|_2^2
\\&\quad\quad\quad\quad
+ (2\alpha^2 + \alpha + 2T_*^2 + 1) \int_0^t\int_\Omega|\dot{\bu}^m|^2\,\mathrm{d}x\,\mathrm{d}s + \frac{1}{2}\int_0^t\int_\Omega|\boldf|^2\,\mathrm{d}x\,\mathrm{d}s
\\
&\quad\quad
\leq
\Big( \frac{\alpha+1}{2}\Big) \|\bv_0\|_2^2 + \Big( \frac{\alpha}{2} + 2\alpha^2 + 2T + 1\Big) \|\bu_0\|_2^2
\\
&\quad\quad\quad\quad
+ 4(2\alpha^2 + \alpha +2(T+1)^2 + 1)\int_0^t\int_\Omega\frac{|\dot{\bu}^m|^2}{4} \,\mathrm{d}x\,\mathrm{d}s + \frac{1}{2}\int_Q |\boldf|^2 \,\mathrm{d}x\,\mathrm{d}t.
\end{align*}
Applying Gr\"{o}nwall's inequality, together with Lemma \ref{pp:lem1} to the second term on the left-hand side, and taking the supremum over \( [0, T_*) \), we deduce that
\begin{equation}\label{pp:equ11}
\begin{aligned}
&\sup_{t\in[0,T_*)} \|\dot{\bu}^m(t) \|_2^2 + \int_0^{T_*}\int_\Omega|\bbT^m| + \frac{|\bbT^m|^{ 1 + \frac{1}{n}}}{n}\,\mathrm{d}x\,\mathrm{d}t
\\&\quad\quad\leq C(a, \alpha, T) \Big( \|\bu_0 \|_2^2 + \|\bv_0\|_2^2 + \|\boldf\|_{L^2(Q)}^2 \Big),
\end{aligned}
\end{equation}
where \( C\) is a finite constant depending only on \( a\), \( \alpha\) and \( T\).
Using (\ref{pp:equ10}), (\ref{pp:equ11}) and (\ref{pp:equ8}), we deduce that
\begin{align*}
\sup_{t\in [0, T_*)}\max_{i,j}|\beta^m_{ij}(t) |^2 + \sup_{t\in [0, T_*)}\max_{i,j}|\dot{\beta}^m_{ij}(t) |^2 \leq C,
\end{align*}
where \( C\) is the right-hand side of (\ref{pp:equ11}). Thus we may repeatedly apply the Carath\'{e}odory existence theorem to deduce the existence of a solution \( (\bu^m,\bbT^m) \) to the Galerkin approximation from \( V_m\) on the whole of \( [0, T]\).
Repeating the above reasoning and using (\ref{pp:equ10}), we deduce that
\begin{equation}\label{pp:equ12}
\begin{aligned}
&\sup_{t\in[0,T]} \|{\bu}^m(t) \|_2^2+ \sup_{t\in[0,T]} \|\dot{\bu}^m(t) \|_2^2 + \int_Q|\bbT^m| + \frac{|\bbT^m|^{1 + \frac{1}{n}}}{n}\,\mathrm{d}x\,\mathrm{d}t
\\&\quad\quad
\leq C(a, \alpha, T) \Big( \|\bu_0 \|_2^2 + \|\bv_0\|_2^2 + \|\boldf\|_{L^2(Q)}^2 \Big).
\end{aligned}
\end{equation}
From (\ref{pp:equ12}), we can immediately deduce a further estimate. Using (\ref{pp:equ13}), we get
\begin{equation}\label{pp:equ14}
\begin{aligned}
\Big(\int_Q |\beps(\dot{\bu}^m + \alpha\bu^m ) |^{n+1}\,\mathrm{d}x\,\mathrm{d}t\Big)^{\frac{1}{n+1}}
&
\leq |Q|^{\frac{1}{n+1}} + \Big( \int_Q \frac{|\bbT^m|^{1 + \frac{1}{n}}}{n^{n+1}}\,\mathrm{d}x\,\mathrm{d}t\Big)^\frac{1}{n+1}
\leq C,
\end{aligned}
\end{equation}
where \( C\) is a positive constant depending only on \( a\), \( \alpha\), \( T\), \( d\) and the data. Applying Theorem \ref{pp:lem4}, it follows that \( (\dot{\bu}^m + \alpha\bu^m)_m\) is bounded in \( L^{n+1}(0, T; W^{1,n+1}_*(\Omega)^d) \), independent of \( m \).

The bound on \( \bbT^m \) in (\ref{pp:equ12}) allows us to deduce a bound on \( \ddot{\bu}^m \) in \( L^{n+1}(0, T; (W^{1,n+1}_*(\Omega)^d)^\prime) \) by use of (\ref{pp:equ6}).
However, since eventually we will let \( n\rightarrow\infty\) it would be preferable to obtain a bound in a space that does not depend on \( n \).
To this end, we now multiply (\ref{pp:equ6}) by \( (\ddot{\beta}^m_{ij} + \alpha\dot{\beta}^m_{ij})(t)\) and sum over \( k \in \{ 1,\dots ,d \} \) and \( l \in \{1,\dots, d\} \) to deduce that
\begin{equation}\label{pp:equ15}
0 = \int_\Omega |\ddot{\bu}^m|^2 + \frac{\partial}{\partial t}\left( \frac{\alpha}{2}|\dot{\bu}^m|^2 + \frac{h_n(|\bbT^m|^2)}{2}\right)  - \boldf\cdot (\ddot{\bu}^m + \alpha\dot{\bu}^m ) \,\mathrm{d}x,
\end{equation}
where \( h_n : [0, \infty) \rightarrow[0, \infty) \) is defined by
\[
h_n(s) = \int_0^s \frac{1}{(1 + t^{\frac{a}{2}})^{1 + \frac{1}{a}}} +
\frac{1}{n^2( 1 + t^{\frac{1}{2}-\frac{1}{2n}} )^2}
+ \left( 1 - \frac{1}{n}\right) \frac{1}{n( 1 + t^{\frac{1}{2}- \frac{1}{2n}})^2}
\,\mathrm{d}t,
\]
where we have made use of the following reasoning:
\begin{align*}
\bbT^m : \frac{\partial}{\partial t} \Big( F_n(\bbT^m)\Big)
&= \bbT^m : \bigg[ \frac{\dot{\bbT}^m}{( 1 + |\bbT^m|^a)^{\frac{1}{a}}} + \frac{\dot{\bbT}^m}{n(1 + |\bbT^m|^{1-\frac{1}{n}}) } -
\frac{(\bbT^m : \dot{\bbT}^m) |\bbT^m|^{a-2}\bbT^m}{(  1 + |\bbT^m|^a)^{1 + \frac{1}{a}}}
\\&\quad\quad\quad\quad - \bigg( 1 - \frac{1}{n}\bigg) \frac{(\bbT^m:\dot{\bbT}^m) |\bbT^m|^{-1-\frac{1}{n} } \bbT^m}{n(1 + |\bbT^m|^{1-\frac{1}{n}})^2}\bigg]
\\
&= \frac{\bbT^m : \dot{\bbT}^m}{( 1 + |\bbT^m|^a)^{1 + \frac{1}{a}}} + \frac{\bbT^m : \dot{\bbT}^m}{n^2 ( 1 + |\bbT^m|^{1-\frac{1}{n}})} + \bigg( 1 - \frac{1}{n}\bigg) \frac{\bbT^m : \dot{\bbT}^m }{n( 1 + |\bbT^m|^{ 1 - \frac{1}{n}} )^2}
\\
&= h_n^\prime(|\bbT^m|^2) \bbT^m : \dot{\bbT}^m
\\
&= h_n^\prime(|\bbT^m|^2)\frac{\partial}{\partial t}\left(\frac{|\bbT^m|^2}{2}\right)
\\
&=\frac{\partial}{\partial t}\left(\frac{h_n(|\bbT^m|^2)}{2}\right).
\end{align*}
Using Lemma \ref{pp:lem1}, there exist positive constants \(c_a\), \( C_a\) depending only on \( a\) such that
\begin{equation}\label{pp:equ16}
c_a\Big( s^{\frac{1}{2}- \frac{a}{2}}\chi_{\{ s \geq 1\} } -1 \Big)
\leq h_n(s) \leq
s^{\frac{1}{n}}+ C_a\Big( s^{\frac{1}{2}- \frac{a}{2}}\chi_{\{ s \geq 1\} } +1 \Big).
\end{equation}
We require the indicator functions since we do not necessarily have \( a\in (0, 1]\).
Integrating (\ref{pp:equ15}) over \( (0, t) \) for an arbitrary \( t\in (0, T) \) and using (\ref{pp:equ16}), we deduce that
\begin{align*}
&\int_0^t \int_\Omega|\ddot{\bu}^m|^2 \,\mathrm{d}x\,\mathrm{d}s + \frac{\alpha}{2}\|\dot{\bu}^m(t) \|_2^2+ \int_\Omega c_a|\bbT^m(t)|^{1-a}\chi_{\{|\bbT^m(t)|\geq 1\} } \,\mathrm{d}x
\\
&\quad\quad
\leq \frac{\alpha}{2}\|\dot{\bu}^m(0) \|_2^2 + \int_\Omega C_a |\bbT^m(0)|^{1-a}\chi_{\{|\bbT^m(0)|\geq 1\}} + |\bbT^m(0)|^{\frac{2}{n}}\,\mathrm{d}x
\\&\quad\quad\quad\quad
+ \int_0^t\int_\Omega \boldf\cdot (\ddot{\bu}^m + \alpha\dot{\bu}^m) \,\mathrm{d}x\,\mathrm{d}t
+ C(a,\Omega)
\\
&\quad\quad
\leq \frac{\alpha}{2}\|\bv_0 \|_2^2 + (C_a + 1) \int_\Omega|\bbT^m(0)|^2\,\mathrm{d}x + \frac{1}{2}\int_0^t\int_\Omega |\ddot{\bu}^m|^2 \,\mathrm{d}x\,\mathrm{d}s
+ \int_0^t\int_\Omega |\boldf|^2\,\mathrm{d}x\,\mathrm{d}s
\\&\quad\quad\quad\quad
+ \frac{\alpha^2}{2}\int_0^t \int_\Omega |\dot{\bu}^m|^2 \,\mathrm{d}x\,\mathrm{d}s + C(a,\alpha, \Omega).
\end{align*}
Rearranging, applying (\ref{pp:equ12}) and taking the supremum over \( t\in (0, T) \), it follows that
\begin{equation}\label{pp:equ18}
\begin{aligned}
&\frac{1}{2}\int_Q |\ddot{\bu}^m|^2\,\mathrm{d}x\,\mathrm{d}t
+\sup_{t\in [0, T]}\Big(
\int_\Omega C_a |\bbT^m(t)|^{1-a}\chi_{\{|\bbT^m(t)|\geq 1\}} \,\mathrm{d}x\Big)
\\
&\quad\quad
\leq C(a,\alpha, \Omega, T) \Big( \|\bu_0 \|_2^2 + \|\bv_0 \|_2^2 + \|\boldf\|_{L^2(Q)}^2 + \int_\Omega |\bbT^m(0)|^2\,\mathrm{d}x\Big),
\end{aligned}
\end{equation}
where \( C\) is a positive constant that is independent of \( m \) and \( n \). To show that the sequence \( (\bbT^m(0))_m \) is bounded in \( L^2(\Omega)^{d\times d}\), independent of \( m \) and \( n\), we first note that
\begin{align*}
\beps(P^m(\bv_0 + \alpha\bu_0 )) = \frac{\bbT^m(0)}{(1 + |\bbT^m(0)|^a)^{\frac{1}{a}}}  + \frac{\bbT^m(0)}{n( 1 + |\bbT^m(0)|^{1-\frac{1}{n}})}.
\end{align*}
Suppose that there exist \(m_0 \in \mathbb{N}\) and \( C_1\in (0, 1) \) such that \( \|\beps(P^m(\bv_0 + \alpha\bu_0))\|_\infty\leq C_1\) for every \( m\geq m_0 \). Since \( |F^{-1}(\bbT)| \geq |F_n^{-1}(\bbT)|\) for every \( \bbT \in \mathbb{R}^{d\times d}\) with \( |\bbT|< 1\) and \( F^{-1}\) is a radial function that increases in absolute value as \( |\bbT |\) increases, we deduce that
\[
|F_n^{-1}(\beps(P^m(\bv_0 + \alpha\bu_0)))|\leq |F^{-1}(\beps(P^m(\bv_0 + \alpha\bu_0))) | \leq f^{-1}(C_1) < \infty,
\]
a.e. in \( \Omega\),  where \( f\) is a function from \( [0, \infty ) \) to \( [0, 1) \) defined by
\[
f(s) = \frac{s}{(1 + s^a)^{\frac{1}{a}}}.
\]
It follows that \( (\bbT^m(0))_{m\geq m_0} \) is uniformly bounded in \( L^\infty(\Omega)^{d\times d}\) independent of \( m \) and \( n \).

It remains to prove the existence of such an \( m_0 \) and \( C_1\). From standard properties of projection operators, for every \( \bv\in L^2_\#(\Omega)^d\) we have \( P^m \bv \rightarrow\bv \) strongly in \( L^2_\#(\Omega)^d\) as \( m \rightarrow\infty \). Furthermore, the projection operator commutes with derivation \cite{RN115}, i.e., for every \( \bv\in  W^{1,2}_\#(\Omega)^d\) we have \( \nabla (P^m\bv) = P^m(\nabla \bv) \) (assuming that \(P^m\) acts component-wise on matrix-valued functions). Thus, for every \( \bv\in W^{k,2}_*(\Omega)^d\) and \( k \in \mathbb{N}\), the following holds:
\begin{align*}
\lim_{m\rightarrow\infty}\|P^m\bv - \bv\|_{k,2} = 0.
\end{align*}
Applying the Sobolev embedding theorem, if \( \bv\in W^{k+1,2}_*(\Omega)^d\) for  \( k \geq\frac{d}{2}\), we have
\begin{align*}
\|\beps(P^m\bv) - \beps(\bv) \|_\infty &\leq C\|\beps(P^m\bv) - \beps(\bv)\|_{k,2}
\\
&\leq C\|P^m \bv - \bv\|_{k+1,2},
\end{align*}
where the right-hand side vanishes in the limit as \( m\rightarrow\infty \) and \( C\) is independent of \( \bv\). Setting \( \bv = \bv_0 + \alpha\bu_0 \), we deduce that there exists an \( m_0 \in \mathbb{N}\) such that
\[
\|\beps(P^m(\bv_0 + \alpha\bu_0)) - \beps(\bv_0 + \alpha\bu_0 ) \|_\infty \leq \frac{1 - C_*}{2},
\]
for every \( m\geq m_0\). The right-hand side is positive since \( C_* \in (0, 1) \). Using (\ref{pp:equ17}) it follows that
\begin{equation}\nonumber
\|\beps(P^m(\bv_0 + \alpha\bu_0))\|_\infty \leq \frac{1 + C_*}{2}=: C_1 < 1,
\end{equation}
for every \( m\geq m_0 \). Substituting this into (\ref{pp:equ18}) we get
\begin{equation}\label{pp:equ19}
\begin{aligned}
&\int_Q |\ddot{\bu}^m|^2 \,\mathrm{d}x\,\mathrm{d}t + \sup_{t\in[0, T]}\Big( \int_\Omega |\bbT^m(t) |^{1-a}\chi_{\{|\bbT^m(t)|\geq 1\}}\,\mathrm{d}x\Big)
\\
&\quad\quad
\leq
C(a,\alpha, \Omega, T) \Big( \|\bu_0 \|_2^2 + \|\bv_0 \|_2^2 + \|\boldf\|_{L^2(Q)}^2 + f^{-1}(C_1) \Big)
\\
&\quad\quad
\leq
C(a,\alpha, \Omega, T,\bu_0, \bv_0 ,\boldf),
\end{aligned}
\end{equation}
for every \( m\geq m_0 \), where \( C\) is a positive constant that is independent of \( m \) and \( n \).

Putting together (\ref{pp:equ12}), (\ref{pp:equ14}), (\ref{pp:equ19}) and using Korn's inequality, we deduce the following convergence results up to a subsequence that we do not relabel:
\begin{itemize}
\item \( \bu^m \overset{\ast}{\rightharpoonup} \bu\) weakly-* in \( L^\infty(0, T; L^2(\Omega)^d)\);
\item \( \dot{\bu}^m \overset{\ast}{\rightharpoonup} \dot{\bu}\) weakly-* in \( L^\infty(0, T; L^2(\Omega)^d)\);
\item \( \ddot{\bu}^m\rightharpoonup \ddot{\bu}\) weakly in \( L^2(0, T; L^2_*(\Omega)^d) \);
\item \( \dot{\bu}^m + \alpha\bu^m \rightharpoonup \dot{\bu}+\alpha\bu\) weakly in \( L^{n+1}(0, T; W^{1,n+1}_*(\Omega)^d) \);
\item \( \bbT^m \rightharpoonup \bbT\) weakly in \( L^{1 + \frac{1}{n}}(0, T; L^{1 + \frac{1}{n}}_\#(\Omega)^{d\times d}) \).
\end{itemize}
We claim that \( (\bu, \bbT) \) is a weak solution of (\ref{pp:equ23}).

By standard regularity results, we know that \( \bu\), \( \dot{\bu}\in C([0, T]; L^2(\Omega)^d) \) up to redefinition almost everywhere. In particular, we have
\begin{equation}\label{pp:equ20}
\lim_{t\rightarrow 0+ }\Big( \|\bu(t) - \bu(0) \|_2  + \|\dot{\bu}(t) - \dot{\bu}(0) \|_2\Big) = 0.
\end{equation}
However, by the Aubin--Lions lemma, we know that the sequences \( (\bu^m)_m\), \((\dot{\bu}^m)_m \) converge strongly in \( C([0, T]; (W^{1,2}_*(\Omega)^d)^\prime)\). Thus we have
\begin{align*}
0 &= \lim_{m\rightarrow\infty} \|\bu(0)  - \bu^m(0) \|_{-1,2} + \|\dot{\bu}(0) - \dot{\bu}^m(0) \|_{-1,2}
\\
&= \lim_{m\rightarrow\infty}\|\bu(0) - P^m\bu_0 \|_{-1,2} + \|\dot{\bu}(0) - P^m \bv_0 \|_{-1,2}
\\
&= \|\bu(0) - \bu_0 \|_{-1,2} + \|\dot{\bu}(0) - \bv_0 \|_{-1,2},
\end{align*}
where \( \|\cdot\|_{-1,2}\) denotes the norm in \( (W^{1,2}_*(\Omega)^d)^\prime\). By combining this with (\ref{pp:equ20}),  the assertion (\ref{pp:equ23}) immediately follows.

To show that (\ref{pp:equ21}) holds, first note that for every \( \bv\in C^\infty_*(\overline{\Omega})^d\) and \( t\in (0, T) \) we have
\begin{equation}\label{pp:equ24}
\int_\Omega\ddot{\bu}^m(t) \cdot P^m\bv + \bbT^m(t) : \beps(P^m\bv) \,\mathrm{d}x = \int_\Omega \boldf(t) \cdot P^m\bv \,\mathrm{d}x.
\end{equation}
For an arbitrary but fixed \( \psi \in C([0, T]) \),  we multiply (\ref{pp:equ24}) by \( \psi(t) \) and integrate over \( (0, T) \). Noting that \( P^m \bv\) converges strongly in \( W^{k,2}_*(\Omega)^d\) to \( \bv\) for every \( k \in \mathbb{N}\), letting \( m\rightarrow\infty \) we get
\begin{align*}
0 &=\int_Q \ddot{\bu}\cdot (\psi \bv) + \bbT: \beps(\psi \bv) - \boldf\cdot (\psi\bv)\,\mathrm{d}x\,\mathrm{dt}
\\
&= \int_0^T \psi(t) \cdot\Big( \int_\Omega\ddot{\bu}(t) \cdot \bv + \bbT(t) :\beps(\bv) - \boldf(t) \cdot \bv\,\mathrm{d}x\Big) \,\mathrm{d}t.
\end{align*}
Since \( \psi \) is arbitrary,  the second factor is integrable over \( (0, T) \) and \( C^\infty_*(\overline{\Omega})^d\) is dense in \( W^{1,p}_*(\Omega)^d\) for every \( p \in [1,\infty) \),  we deduce that (\ref{pp:equ21}) holds. Furthermore, we note that \( (\dot{\bu}+\alpha\bu)(t)\) is a valid test function in (\ref{pp:equ21}) for a.e. \( t\in (0, T) \).

To show that (\ref{pp:equ22}) holds, we will use a variant of Minty's method.
First note that the sequence \( (\dot{\bu}^m + \alpha\bu^m )_m\) converges weakly in \( L^2(0, T; W^{1,2}_*(\Omega)^d) \) and \( (\ddot{\bu}^m + \alpha\dot{\bu}^m)_m \) converges weakly in \( L^2(0, T; L^2_*(\Omega)^d) \). Thus by the Aubin--Lions lemma, the sequence \( (\dot{\bu}^m + \alpha\bu^m)_m \) converges strongly in the space \( L^2(0, T; L^2_*(\Omega)^d) \) as \( m\rightarrow\infty \). Testing (\ref{pp:equ6}) against \((\dot{\bu}^m + \alpha\bu^m)(t)\) and integrating over \( (0, T) \) we deduce that
\begin{equation}\label{pp:equ25}
\begin{aligned}
\lim_{m\rightarrow\infty}\int_Q \bbT^m:\beps(\dot{\bu}^m + \alpha\bu^m ) \,\mathrm{d}x\,\mathrm{d}t
&= \lim_{m\rightarrow\infty}\int_Q - \ddot{\bu}^m\cdot (\dot{\bu}^m + \alpha\bu^m ) + \boldf\cdot (\dot{\bu}^m + \alpha\bu^m ) \,\mathrm{d}x\,\mathrm{dt}
\\
&= \int_Q - \ddot{\bu}\cdot (\dot{\bu}+\alpha\bu) + \boldf\cdot (\dot{\bu}+\alpha\bu) \,\mathrm{d}x\,\mathrm{d}t
\\
&= \int_Q \bbT: \beps(\dot{\bu}+\alpha\bu) \,\mathrm{d}x\,\mathrm{d}t,
\end{aligned}
\end{equation}
using that \( (\dot{\bu}+\alpha\bu)(t) \) is a valid test function in (\ref{pp:equ21}) for the transition to the final line.
Let \( \bbS \in L^{1 + \frac{1}{n}}(Q)^{d\times d}\) be arbitrary but fixed. Using the monotonicity of \( F_n \), (\ref{pp:equ25}) and the convergence results, we have that
\begin{equation}\label{pp:equ26}
\begin{aligned}
0 &\leq \lim_{m\rightarrow\infty} \int_Q (\bbT^m - \bbS):(F_n(\bbT^m) - F_n(\bbS) ) \,\mathrm{d}x\,\mathrm{d}t
\\
&= \int_Q (\bbT - \bbS) :(\beps(\dot{\bu}+\alpha\bu ) - F_n(\bbS))\,\mathrm{d}x\,\mathrm{d}t.
\end{aligned}
\end{equation}
We replace \(\bbS\) by \( \bbT \pm \gamma\bbU\) for an arbitrary \( \gamma> 0 \) and \( \bbU\in L^\infty(Q)^{d\times d}\)  to obtain
\begin{align*}
0 \leq \mp \int_Q \gamma\bbU:(\beps(\dot{\bu}+\alpha\bu) - F_n(\bbT \pm \gamma\bbU) ) \,\mathrm{d}x\,\mathrm{d}t.
\end{align*}
We divide through by \( \gamma\) and use Lebesgue's dominated convergence theorem when letting \( \gamma\rightarrow 0+ \) in order to deduce that
\begin{align*}
 0 \leq \mp \int_Q \bbU: (\beps(\dot{\bu}+\alpha\bu)  - F_n(\bbT)) \,\mathrm{d}x\,\mathrm{d}t.
\end{align*}
Setting \( \bbU = \frac{\beps(\dot{\bu}+\alpha\bu)  - F_n(\bbT)}{1 + |\beps(\dot{\bu}+\alpha\bu)  - F_n(\bbT)|}\), it immediately follows that (\ref{pp:equ22}) holds.
Hence \( (\bu,\bbT) \) is a weak solution of (\ref{pp:equ2}).

To show that (\ref{pp:equ27}) holds, we only need to show that \( \bbT^m \rightarrow\bbT \) converges pointwise a.e. on \( Q\) as \( m\rightarrow\infty \) and \( \bbT^m(t) \rightarrow\bbT(t) \) pointwise a.e. on \( \Omega\) for a.e. \( t\in (0, T) \).  Combining this with (\ref{pp:equ12}), (\ref{pp:equ14}), (\ref{pp:equ19}) and Fatou's lemma with the weak lower semi-continuity of norms, the bound  (\ref{pp:equ27}) will follow.

We can in fact  prove the stronger result that \( \bbT^m \rightarrow\bbT \) strongly in \( L^1(Q)^{d\times d}\) as \( m\rightarrow\infty\) by mimicking an argument contained in \cite{RN8}. For each \( k > 0 \), we define the set
\begin{align*}
Q^m_k = \{ (t,x)\in Q: 1 + |\bbT| + |\bbT^m|> k \}.
\end{align*}
Using that \( \bbT^m \rightharpoonup \bbT \) weakly in \( L^{1 + \frac{1}{n}}(Q)^{d\times d}\) and the bound from (\ref{pp:equ12}), there exists a positive constant \( C = C(n) \) independent of \( m \) such that
\begin{align*}
\int_Q |\bbT|^{1 + \frac{1}{n}} + |\bbT^m|^{1 + \frac{1}{n}} \,\mathrm{d}x\,\mathrm{d}t \leq C(n).
\end{align*}
It follows that \( |Q^m_k |\leq C(n) k^{-( 1 + \frac{1}{n})}\). With this in mind, we have that
\begin{align*}
&\Big( \int_Q |\bbT^m - \bbT|\,\mathrm{d}x\,\mathrm{d}t\Big)
\\
&\quad\quad
\leq C\|\bbT^m - \bbT\|_{L^{1 + \frac{1}{n}}(Q^m_k)}|Q^m_k|^{\frac{2}{n+1}}
+Ck^{1 + a}\int_{Q\setminus Q^m_k} \frac{|\bbT^m - \bbT|^2}{(1 + |\bbT^m| + |\bbT|)^{1 + a}}\,\mathrm{d}x\,\mathrm{d}t
\\
&\quad\quad
\leq Ck^{-\frac{2}{n}} + Ck^{1 + a} \int_Q (\bbT^m - \bbT):(F(\bbT^m) - F(\bbT))\,\mathrm{d}x\,\mathrm{d}t
\\
&\quad\quad
\leq
Ck^{-\frac{2}{n}} + Ck^{1 + a}\int_Q (\bbT^m - \bbT) : (\beps(\dot{\bu}^m + \alpha\bu^m) - \beps(\dot{\bu}+\alpha\bu)) \,\mathrm{d}x\,\mathrm{d}t,
\end{align*}
where \( C\) is a positive constant that is independent of \( k \) and \( m \).
In the limit as \( m\rightarrow\infty\), the second term on the right-hand side will vanish, recalling (\ref{pp:equ25}). It follows that, for every \( k > 0 \),
\begin{align*}
\lim_{m\rightarrow\infty }\Big( \int_Q |\bbT^m - \bbT |\,\mathrm{d}x\,\mathrm{d}t\Big)^2 \leq Ck^{-\frac{2}{n}}.
\end{align*}
Since \( k \) is arbitrary, we deduce that \( \bbT^m \rightarrow\bbT \) strongly in \( L^1(Q)^{d\times d}\) as \( m\rightarrow\infty \). Taking a further subsequence if necessary, we get that \( \bbT^m \rightarrow\bbT \) pointwise a.e. in \( Q\).

To prove that \( \bbT^m(t) \rightarrow\bbT(t) \) converges pointwise a.e. on \( \Omega\) for a.e. \( t\in (0, T) \), suppose otherwise. That is, assume that there exists a measurable set \( A\subset (0, T) \) of positive measure such that, for each \( t\in A\), there exists a measurable set \( B(t) \subset \Omega\) of positive measure such that \( (\bbT^m(t,x))_m\) does not converge to \( \bbT(t,x) \), for every \( x\in B(t) \). Let \( M = \{ (t,x) : t\in A, x\in B(t) \}\), a measurable subset of \( Q\) such that
\[
|M| = \int_A\int_{B(t)} 1\,\mathrm{d}x\,\mathrm{d}t = \int_A |B(t) |\,\mathrm{d}t > 0.
\]
However, \( \bbT^m \not\to \bbT \) pointwise on \( M \). This contradicts the fact that \( \bbT^m\rightarrow\bbT \) pointwise a.e. on \( Q\). Thus our original claim holds and we deduce that (\ref{pp:equ27}) holds. This concludes the existence part of the proof.

To prove uniqueness, suppose that \( (\bu_1, \bbT_1) \), \( (\bu_2,\bbT_2) \) are weak solutions of (\ref{pp:equ2}) in the sense of Definition \ref{pp:def2}  with respect to the same initial data.
Let \( \bv := \bu_1 - \bu_2\) and \( \bbS:= \bbT_1 - \bbT_2\).
Testing in (\ref{pp:equ21}) for \( (\bu_i,\bbT_i)\), \( i\in \{ 1,2\}\),  with  test function \( (\dot{\bv} + \alpha\bv)(t)\), integrating over \( (0, t) \) for an arbitrary \( t\in (0, T) \) and subtracting the result, we get
\begin{align*}
0 &= \int_0^t \int_\Omega \ddot{\bv}\cdot (\dot{\bv} + \alpha\bv) + \bbS: \beps(\dot{\bv} + \alpha\bv) \,\mathrm{d}x\,\mathrm{d}s
\\
&= \int_\Omega \frac{|\dot{\bv}(t) |^2}{2} + \alpha\dot{\bv}(t) \cdot \bv(t) \,\mathrm{d}x + \int_0^t \int_\Omega\bbS : \beps(\dot{\bv} + \alpha\bv) - \alpha|\dot{\bv}|^2 \,\mathrm{d}x\,\mathrm{d}s.
\end{align*}
We used the fact that \( \bv(t) \rightarrow \mathbf{0}\) and \( \dot{\bv}(t) \rightarrow\mathbf{0}\) strongly in \( L^2(\Omega)^d\) as \( t\rightarrow 0+\), as well as \( \bv\), \( \dot{\bv}\in C([0, T]; L^2(\Omega)^d) \) with Lemma 7.3 of \cite{nonlinpderoubicek}. Using the monotonicity of \( F_n \), it follows that
\begin{align*}
0 &\leq \int_\Omega \frac{|\dot{\bv}(t) |^2}{2}\,\mathrm{d}x + \int_0^t \int_\Omega (\bbT_1 - \bbT_2):(F_n(\bbT_1) - F_n(\bbT_2))\,\mathrm{d}x\,\mathrm{d}s
\\
&\leq \int_0^t\int_\Omega (2\alpha^2 + \alpha) |\dot{\bv}|^2 \,\mathrm{d}x\,\mathrm{d}s + \int_\Omega\frac{|\dot{\bv}(t) |^2}{4}\,\mathrm{d}x.
\end{align*}
Applying Gr\"{o}nwall's inequality, we deduce that
\[
\int_\Omega\frac{|\dot{\bv}(t) |^2}{4}\,\mathrm{d}x + \int_0^t \int_\Omega (\bbT_1 - \bbT_2):(F_n(\bbT_1) - F_n(\bbT_2))\,\mathrm{d}x\,\mathrm{d}s = 0,
\]
for a.e. \( t\in (0, T) \). Thus \( \dot{\bu}_1 = \dot{\bu}_2 \) a.e. in \( Q\). Since \( \bu_1(0) = \bu_2(0) \), it follows  that \( \bu_1 = \bu_2 \) and \( F_n(\bbT_1) = F_n(\bbT_2) \) a.e. in \( Q\). Noting that \( F_n \) is a bijection, we must have \( \bbT_1 = \bbT_2\) a.e. in \( Q\). In particular, \( (\bu_1,\bbT_1 ) = (\bu_2 ,\bbT_2) \) and we have uniqueness of weak solutions of (\ref{pp:equ2}).
\end{proof}

We would like to use this approximation in order to show that a weak solution exists to the strain-limiting problem (\ref{pp:equ1}). To do this, we must obtain further \textit{a priori} estimates. Since we are working in the periodic setting, we will be able to do this when working with the Galerkin approximation \( (\bu^m ,\bbT^m)\) of the regularised problem, rather than the weak solution of (\ref{pp:equ2}) itself.

\begin{lemma}\label{pp:lem5}
Suppose that the hypotheses of Theorem \ref{pp:thm1} hold and that additionally we have  \( \bu_0 \), \( \bv_0 \in W^{1,2}_*(\Omega)^d\) and \( \boldf \in L^2(0, T; W^{1,2}_*(\Omega)^d) \).
Let \( (\bu^m ,\bbT^m) \) be the solution of the Galerkin approximation from \( V_m \) as in the proof of Theorem \ref{pp:thm1}.
There exists a constant \( C\) independent of \( m \) and \( n \) such that
\begin{equation}\label{pp:equ28}
\begin{aligned}
&\sup_{t\in [0, T]} \|\nabla \bu^m(t) \|_2^2 + \sup_{t\in [0, T]}\|\nabla\dot{\bu}^m(t) \|_2^2 + \int_Q \frac{|\nabla\bbT^m|^2}{(1 + |\bbT^m|)^{1+a}} \,\mathrm{d}x\,\mathrm{d}t
\\&\quad\quad
\leq C(a,\alpha, \Omega, T) \Big( \|\nabla \bu_0 \|_2^2 + \|\nabla \bv_0\|_2^2 + \|\nabla \boldf\|_{L^2(Q)}^2\Big).
\end{aligned}
\end{equation}
Moreover, if we also have that \( \boldf\in W^{1,2}([0, T+\tilde{\epsilon}); L^2_*(\Omega)^d) \) for some \( \tilde{\epsilon}> 0 \), then
\begin{equation}\label{pp:equ29}
\begin{aligned}
&\sup_{t\in [0, T]}\|\ddot{\bu}^m(t) \|_2^2 + \int_Q \frac{|\dot{\bbT}^m|^2}{(1 + |\bbT^m|)^{1 + a}} \,\mathrm{d}x
\\
&\quad\quad
\leq
C(a, \alpha, \Omega, T) \Big( \|\bv_0 \|_2^2 + \|\dot{\boldf}\|_{L^2(Q)}^d
+ \|\boldf(0)\|_{L^2(\Omega)}^2 + (1 + f^{-1}(C_1))\|\bv_0 + \alpha\bu_0 \|_{2,2}^2 \Big),
\end{aligned}
\end{equation}
for every \( m\geq m_0 \), where \( C_1 \in(0, 1) \) and \( m_0 \in\mathbb{N}\) are the constants from the proof of Theorem~\ref{pp:thm1} and \( f\) is defined on \( [0, \infty ) \) by \( f(t) = ( 1 + t^a)^{-\frac{1}{a}}t \).  The constant \( C\) on the right-hand side of (\ref{pp:equ29}) is independent of \( m \) and \( n \).
\end{lemma}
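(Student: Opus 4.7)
Both estimates are to be derived by testing suitable differentiated forms of the Galerkin identity \eqref{pp:equ6} against appropriate functions, exploiting the fact that in the periodic setting both spatial and temporal differentiation map the finite-dimensional space \(V_m\) into itself. The crucial analytic ingredient will be the pointwise lower bound \(\bbA : DF_n(\bbT)[\bbA] \geq c|\bbA|^2 (1+|\bbT|)^{-(1+a)}\), obtained by sending the increment to zero in the quadratic form supplied by Lemma \ref{pp:lem2} (the regularising summand of \(F_n\) is itself monotone and contributes a nonnegative term to the resulting quadratic form, so no information is lost).

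\textbf{Estimate \eqref{pp:equ28}.} For each \(k\in\{1,\dots,d\}\) I would take the test function in \eqref{pp:equ6} to be of the form \(\partial_k \phi\) with \(\phi \in V_m\); integration by parts in \(x_k\) then yields the differentiated identity \(\int_\Omega \partial_k \ddot{\bu}^m \cdot \phi + \partial_k \bbT^m : \beps(\phi)\,\dd x = \int_\Omega \partial_k \boldf \cdot \phi\,\dd x\) for every \(\phi \in V_m\). Testing with \(\phi = \partial_k \dot{\bu}^m + \alpha \partial_k \bu^m\), the inertial term becomes, up to an \(\alpha|\partial_k \dot{\bu}^m|^2\) correction, a perfect time derivative, while differentiating the constitutive relation \(\beps(\dot{\bu}^m + \alpha \bu^m) = F_n(\bbT^m)\) in \(x_k\) gives \(\beps(\partial_k \dot{\bu}^m + \alpha \partial_k \bu^m) = DF_n(\bbT^m)[\partial_k \bbT^m]\). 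The pointwise lower bound above, summation over \(k\), integration over \((0,t)\), Gronwall, and the elementary estimate on \(\|\nabla \bu^m(t)\|_2\) coming from \(\nabla \bu^m(t) = \nabla P^m \bu_0 + \int_0^t \nabla \dot{\bu}^m(s)\,\dd s\) then produce \eqref{pp:equ28}; the initial contributions are bounded by \(\|\nabla \bu_0\|_2 + \|\nabla \bv_0\|_2\) since \(P^m\) commutes with \(\nabla\) and is non-expansive on \(L^2_\#\).

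\textbf{Estimate \eqref{pp:equ29}.} Because, as noted in the proof of Theorem \ref{pp:thm1}, the Galerkin solution extends to an interval \([0, T+\epsilon)\), I may differentiate \eqref{pp:equ6} in \(t\) and test the resulting identity against \(\ddot{\bu}^m + \alpha \dot{\bu}^m \in V_m\). Time-differentiating the constitutive relation gives \(\beps(\ddot{\bu}^m + \alpha \dot{\bu}^m) = DF_n(\bbT^m)[\dot{\bbT}^m]\), so the same pointwise monotonicity bound produces the weighted \(L^2\)-integral of \(\dot{\bbT}^m\) on the left-hand side. Gronwall's inequality together with \eqref{pp:equ12} then reduce the proof of \eqref{pp:equ29} to controlling the single boundary contribution \(\|\ddot{\bu}^m(0)\|_2^2\).

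\textbf{Main obstacle.} This boundary estimate is the key technical difficulty, and it is precisely the reason the hypothesis demands \(\bv_0 + \alpha\bu_0 \in W^{2,2}_*(\Omega)^d\). Testing \eqref{pp:equ6} at \(t = 0\) against \(\ddot{\bu}^m(0) \in V_m\) itself and integrating by parts yields \(\|\ddot{\bu}^m(0)\|_2 \leq \|\mathrm{div}\,\bbT^m(0)\|_2 + \|\boldf(0)\|_2\). Since \(\bbT^m(0) = F_n^{-1}(\beps(P^m(\bv_0+\alpha\bu_0)))\) pointwise, and the proof of Theorem \ref{pp:thm1} supplies the uniform bound \(\|\beps(P^m(\bv_0+\alpha\bu_0))\|_\infty \leq C_1 < 1\) for \(m \geq m_0\), the values of \(\bbT^m(0)\) lie in a fixed ball of radius \(f^{-1}(C_1)\). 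On this ball \(\|DF_n^{-1}\|\) is bounded by a constant depending only on \(f^{-1}(C_1)\) (uniformly in \(n\)), and the chain rule yields \(\|\nabla \bbT^m(0)\|_2 \leq C(1+f^{-1}(C_1))\|\bv_0 + \alpha\bu_0\|_{2,2}\), which gives the required \(n\)- and \(m\)-independent bound on \(\|\ddot{\bu}^m(0)\|_2^2\) matching the right-hand side of \eqref{pp:equ29}.
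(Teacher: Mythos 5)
Your proposal is correct and follows essentially the same route as the paper: spatial differentiation of the Galerkin system tested against $\nabla(\dot{\bu}^m+\alpha\bu^m)$ (the paper phrases this as testing with $\nabla\cdot\nabla(\dot{\bu}^m+\alpha\bu^m)$ and integrating by parts) for \eqref{pp:equ28}, temporal differentiation tested against $\ddot{\bu}^m+\alpha\dot{\bu}^m$ for \eqref{pp:equ29}, the coercivity of $DF_n$ inherited from the monotonicity structure of Lemma \ref{pp:lem2}, and the bound on $\|\ddot{\bu}^m(0)\|_2$ via $\mathrm{div}\,\bbT^m(0)$, the chain rule, and the uniform $L^\infty$ control of $\beps(P^m(\bv_0+\alpha\bu_0))$ from the proof of Theorem \ref{pp:thm1}. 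The one point where the paper is more careful is the time differentiation: since $\boldf$ is only $W^{1,2}$ in time, the paper works with difference quotients $\Delta^h_t$ and passes to the limit $h\to 0^+$ by dominated convergence rather than differentiating the Galerkin ODE directly, but the substance of the argument is identical.
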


\begin{proof}
We start with (\ref{pp:equ28}). Derivation does not increase the degree of a trigonometric polynomial. In particular, \( \nabla \cdot \nabla (\dot{\bu}^m + \alpha\bu^m ) \) is a trigonometric polynomial of degree at most \( m \). Furthermore it has integral over \( \Omega\) equal to \( 0 \) as a result of  periodicity. Thus \( \nabla \cdot \nabla (\dot{\bu}^m + \alpha\bu^m ) \) is a valid test function in (\ref{pp:equ6}). It follows that
\begin{equation}\label{pp:equ30}
\begin{aligned}
0&=
-\int_\Omega\ddot{\bu}^m \cdot (\nabla\cdot\nabla(\dot{\bu}^m + \alpha\bu^m)) + \bbT^m:\beps(\nabla\cdot\nabla(\dot{\bu}^m + \alpha\bu^m))
\\&\quad\quad
- \boldf\cdot (\nabla\cdot\nabla(\dot{\bu}^m + \alpha\bu^m)) \,\mathrm{d}x
\\
&= \int_\Omega \nabla \ddot{\bu}^m :\nabla (\dot{\bu}^m + \alpha\bu^m) + (\nabla \cdot \bbT^m)\cdot (\nabla\cdot\nabla(\dot{\bu}^m + \alpha\bu^m))
\\&\quad\quad
- \nabla \boldf:\nabla (\dot{\bu}^m + \alpha\bu^m ) \,\mathrm{d}x
\\
&= \int_\Omega \frac{\partial}{\partial t}\Big( \frac{|\nabla \dot{\bu}^m|^2}{2}  + \alpha\nabla \dot{\bu}^m : \nabla \bu^m \Big) - \alpha|\nabla \dot{\bu}^m|^2 + \nabla \bbT^m \,\vdots\, \nabla F_n(\bbT^m)
\\&\quad\quad
- \nabla \boldf:\nabla (\dot{\bu}^m + \alpha\bu^m ) \,\mathrm{d}x.
\end{aligned}
\end{equation}
We use \( \nabla \bbS \) to denote the third order tensor \( (\partial_k S_{ij})_{i,j,k}\) and if \( \mathcal{S}_1\), \(\mathcal{S}_2\) are third order tensors, we let~``\( \vdots\)'' denote the triple scalar product
\[
\mathcal{S}_1 \,\vdots\, \mathcal{S}_2 = \sum_{i,j,k=1}^d (\mathcal{S}_1)_{ijk}(\mathcal{S}_2)_{ijk}.
\]
We justify the transition to the last line of (\ref{pp:equ30}) as follows. For ease of notation, we write \( \bbS = \bbT^m \) and \( \bv = \dot{\bu}^m + \alpha\bu^m \) in the following calculation. Using integration by parts and the periodic boundary conditions, we have
\begin{align*}
\int_\Omega (\nabla\cdot \bbS) \cdot (\nabla \cdot \nabla \bv) \,\mathrm{d}x
&=
\int_\Omega\frac{\partial S_{ij}}{\partial x_j} \frac{\partial^2 v_i}{\partial x_k^2}
\\
&= \int_\Omega\frac{\partial S_{ij}}{\partial x_k}\frac{\partial^2 v_i}{\partial x_k \partial x_j}\,\mathrm{d}x
\\
&= \int_\Omega \frac{\partial S_{ij}}{\partial x_k}\frac{\partial}{\partial x_k}\Big( \frac{1}{2}\Big( \frac{\partial v_i }{\partial x_j} + \frac{\partial v_j }{\partial x_i}\Big) \Big) \,\mathrm{d}x
\\
&= \int_\Omega \nabla \bbS\, \vdots\, \nabla \beps(\bv) \,\mathrm{d}x.
\end{align*}
Manipulating (\ref{pp:equ30}) in an almost identical way as in the calculation that resulted  (\ref{pp:equ12}), we deduce that
\begin{align*}
&\sup_{t\in [0, T]}\|\nabla\bu^m(t) \|_2^2 + \sup_{t\in [0, T]} \|\nabla \dot{\bu}^m(t) \|_2^2 + \int_Q \frac{|\nabla \bbT^m|^2}{( 1 + |\bbT^m|)^{1 + a}} \,\mathrm{d}x\,\mathrm{d}t
\\
&\quad\quad
\leq C(a,\alpha, T) \Big( \|\nabla \bu_0 \|_2^2 + \|\nabla \bv_0 \|_2^2 + \|\nabla \boldf\|_{L^2(Q)}^2 \Big) ,
\end{align*}
where \( C\) is a constant that is independent of \( n \) and \( m \). Thus (\ref{pp:equ28}) holds.

For (\ref{pp:equ29}), we recall that the weak solution \( (\bu^m ,\bbT^m ) \) can be extended to the larger interval \( (0, T+ \epsilon) \) for some \( \epsilon = \epsilon_{m,n}> 0 \). We define the undivided difference quotient with increment \( h \) in the time variable by
\[
\Delta^h_t g(t,x) = g(t+h, x) - g(t,x),
\]
for any function \( g\). Provided that \( h > 0 \) is sufficiently small, we deduce that \( \Delta^h_t \bu^m\), \( \Delta^h_t \bbT^m \)  and \( \Delta^h_t \boldf\) are well-defined on \( (0, T]\). From now on, we will assume that \( h >0 \) is small enough so that this is true.

From (\ref{pp:equ6}) we have that
\begin{equation}\label{pp:equ31}
\int_\Omega \Delta^h_t \ddot{\bu}^m(t) \cdot \bv + \Delta^h_t \bbT^m(t) : \beps(\bv) \,\mathrm{d}x = \int_\Omega \Delta^h_t \boldf(t) \cdot \bv\,\mathrm{d}x,
\end{equation}
for every \( \bv\in V_m\) and \( t\in (0, T) \). Setting \(\bv = \Delta^h_t(\dot{\bu}^m + \alpha\bu^m)(t) \) in (\ref{pp:equ31}), we deduce that
\begin{align*}
0&= \int_\Omega\bigg[ \frac{\partial}{\partial t}\Big( \frac{|\Delta^h_t \dot{\bu}^m|^2}{2} + \alpha\Delta^h_t \dot{\bu}^m \cdot \Delta^h_t \bu^m \Big) - \alpha|\Delta^h_t\dot{\bu}^m|^2 + \Delta^h_t \bbT^m : \Delta^h_t F_n(\bbT^m)
\\
&\quad\quad
 - \Delta^h_t \boldf \cdot \Delta^h_t (\dot{\bu}^m + \alpha\bu^m) \bigg]\,\mathrm{d}x.
\end{align*}
Integrating over \( (0, t) \) for arbitrary \( t\in (0, T) \), manipulating in the usual way and dividing through by \( h^2 \) yields
\begin{align*}
&\frac{\|\Delta^h_t \dot{\bu}^m(t) \|_2^2 }{h^2} + \int_0^t \int_\Omega \frac{\Delta^h_t \bbT^m}{h} :\frac{\Delta^h_t F(\bbT^m) }{h} \,\mathrm{d}x\,\mathrm{d}s
\\
&\quad\quad
\leq C(a, \alpha, T) \Big( \frac{\|\Delta^h_t \dot{\bu}^m(0) \|_2^2}{h^2} + \frac{\|\Delta^h_t \bu^m(0) \|_2^2 }{h^2} + \frac{\|\Delta^h_t \boldf\|_{L^2(Q)}^2}{h^2}\Big).
\end{align*}
Using the regularity properties of \( (\phi_i)_{i=1}^\infty \) and the coefficients \( \betab^m \) of the finite-dimensional solution, we may use Lebesgue's dominated convergence theorem when taking the limit as \( h\rightarrow 0+ \) to deduce that
\begin{equation}\label{pp:equ34}
\begin{aligned}
&\sup_{t\in [0, T]}\|\ddot{\bu}^m(t) \|_2^2 + \int_Q \frac{|\dot{\bbT}^m|^2}{(1 + |\bbT^m|)^{1+a}} \,\mathrm{d}x\,\mathrm{d}t
\\
&\quad\quad
\leq C(a,\alpha, T) \Big( \|\ddot{\bu}^m(0) \|_2^2 + \|\dot{\bu}^m (0) \|_2^2 + \|\dot{\boldf}\|_{L^2(Q)}^2 \Big)
\\
&\quad\quad
\leq C(a,\alpha, T) \Big( \|\ddot{\bu}^m(0) \|_2^2 + \|\bv_0 \|_2^2 + \|\dot{\boldf}\|_{L^2(Q)}^2 \Big).
\end{aligned}
\end{equation}
To bound \( (\ddot{\bu}^m(0) )_m \) in \( L^2(\Omega)^d\), independent of \( m\), we note that by the structure of the finite-dimensional problem  the following holds:
\begin{equation}\label{pp:equ32}
\begin{aligned}
\|\ddot{\bu}^m(0) \|_2^2 &= \int_\Omega \ddot{\bu}^m(0) \cdot \ddot{\bu}^m(0) \,\mathrm{d}x
\\
&= \int_\Omega - \bbT^m(0): \beps(\ddot{\bu}^m(0) ) + \boldf(0) \cdot \ddot{\bu}^m(0) \,\mathrm{d}x
\\
&= \int_\Omega \Big(\mathrm{div}( F_n^{-1}(\beps(P^m(\bv_0 + \alpha\bu_0)))) + \boldf(0)\Big) \cdot \ddot{\bu}^m(0) \,\mathrm{d}x
\\
&\leq \Big(\|\mathrm{div}( F_n^{-1}(\beps(P^m(\bv_0 + \alpha\bu_0))))\|_2 + \|\boldf(0) \|_{2}\Big)\|\ddot{\bu}^m(0) \|_2.
\end{aligned}
\end{equation}
To bound the first term on the right-hand side of (\ref{pp:equ32}),  we first note that
\begin{equation}\label{pp:equ33}
\begin{aligned}
&\|\mathrm{div}( F_n^{-1}(\beps(P^m(\bv_0 + \alpha\bu_0))))\|_2
\\&\quad\quad\leq \|D(F_n^{-1}) (\beps(P^m(\bv_0 + \alpha\bu_0 ))) \|_\infty\|D(\beps(P^m (\bv_0 + \alpha\bu_0 )))\|_{2}
\\&\quad\quad
\leq
\|D(F_n^{-1}) (\beps(P^m(\bv_0 + \alpha\bu_0 ))) \|_\infty\|\bv_0 + \alpha\bu_0 \|_{2,2},
\end{aligned}
\end{equation}
where \( F_n^{-1}\) is continuously differentiable by Lemma \ref{pp:lem6}. By properties of symmetric, positive definite matrices, we are also able to show that there exists a constant \( C_a \) depending only on the parameter \( a\) such that
\begin{equation}\label{pp:equ48}
|D(F_n^{-1})(\bbS)| = |(DF_n)^{-1}(F_n^{-1}(\bbS))| \leq C_a ( 1 + |F_n^{-1}(\bbS)|^{a+1}),
\end{equation}
for every \( \bbS\in\mathbb{R}^{d\times d}\).
The first equality comes from use of the inverse function theorem.
Using (\ref{pp:equ48}) and (\ref{pp:equ33}) in (\ref{pp:equ32}) yields
\begin{align*}
\|\ddot{\bu}^m(0) \|_2 &\leq \|\boldf(0) \|_2 + \|(DF_n)^{-1}(\bbT^m(0))\|_\infty \|\bv_0 + \alpha\bu_0 \|_{2,2}
\\
&\leq \|\boldf(0) \|_2 + C_a( 1 + f^{-1}(C_1)^{a+1}) \|\bv_0 + \alpha\bu_0 \|_{2,2}
\\
&\leq \|\boldf\|_{L^\infty(0, T; L^2_*(\Omega))}+ C_a( 1 + f^{-1}(C_1)^{a+1}) \|\bv_0 + \alpha\bu_0 \|_{2,2},
\end{align*}
for every \( m \geq m_0 \), where \( m_0 \) and \( C_1 \) are the constants from the proof of Theorem \ref{pp:thm1}. Substituting this into (\ref{pp:equ34}), we deduce that
\begin{align*}
&\sup_{t\in [0, T]}\|\ddot{\bu}^m(t) \|_2^2 + \int_Q \frac{|\dot{\bbT}^m|^2 }{(1 + |\bbT^m|)^{1 + a}} \,\mathrm{d}x\,\mathrm{d}t
\\&\quad\quad
\leq C(a,\alpha, T) \Big( \|\boldf\|_{L^\infty(0, T; L^2_*(\Omega))}+ ( 1 + f^{-1}(C_1)^{a+1}) \|\bv_0 + \alpha\bu_0 \|_{2,2} + \|\bv_0 \|_2^2 + \|\dot{\boldf}\|_{L^2(Q)}^2\Big)
\\
&\quad\quad
\leq C(a, \alpha, T, \bu_0, \bv_0, \boldf),
\end{align*}
for every \( m\geq m_0 \), where \( C\) is a constant that is independent of \( m \) and \( n \). Thus (\ref{pp:equ28}) is satisfied and the proof is complete.
\end{proof}

\section{Existence of a solution to the strain-limiting problem}\label{pp:seclimitn}
\begin{theorem}\label{pp:thm2}
Let $\alpha>0$ and $a>0$. Assume that \( \bu_0 \), \( \bv_0 \in W^{1,2}_*(\Omega)^d\) are given such that \( \bv_0 + \alpha\bu_0 \in W^{k+1,2}_*(\Omega)^d\) for some \( k > \frac{d}{2}\) with
\[
\|\beps(\bv_0 + \alpha\bu_0 )\|_\infty \leq C_* < 1,
\]
for a constant \( C_* \in (0, 1) \). Let \(  \boldf\) be an element of \( W^{1,2}(0, T+\epsilon; L^2_*(\Omega)^d) \cap  L^2 (0, T; W^{1,2}_*(\Omega)^d)\), for an \( \epsilon>0 \).  Then, there exists a unique weak solution \( (\bu, \bbT) \) of the strain-limiting problem (\ref{pp:equ1}) in the sense of Definition \ref{pp:def1}.

Furthermore, if \( (\bu^n, \bbT^n) \) denotes the weak solution of (\ref{pp:equ2}) for \( n \in \mathbb{N}\), the following convergence results hold:
\begin{itemize}
\item \( \bu^n \overset{\ast}{\rightharpoonup}\bu\) weakly-* in \( L^\infty(0, T; W^{1,2}_*(\Omega)^d) \);
\item \( \dot{\bu}^n \overset{\ast}{\rightharpoonup}\dot{\bu}\) weakly-* in \( L^\infty(0, T; W^{1,2}_*(\Omega)^d) \);
\item \( \ddot{\bu}^n \overset{\ast}{\rightharpoonup} \ddot{\bu}\) weakly-* in \(L^\infty(0, T; L^2_*(\Omega)^d) \);
\item \( \dot{\bu}^n + \alpha\bu^n \rightharpoonup \dot{\bu}+ \alpha\bu \) weakly in \( L^p(0, T; W^{1,p}_*(\Omega)^d) \) for every \( p \in [1,\infty) \);
\item \( \bbT^n \rightarrow\bbT \) pointwise a.e. in \( Q\).
\end{itemize}
\end{theorem}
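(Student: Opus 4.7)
The plan is to pass to the limit $n \to \infty$ in the sequence $(\bu^n, \bbT^n)$ of weak solutions to the regularised problem (\ref{pp:equ2}) produced by Theorem~\ref{pp:thm1}. The $n$-independent bounds from Theorem~\ref{pp:thm1} and Lemma~\ref{pp:lem5}, which transfer to the weak solutions via weak lower semicontinuity, are the essential input. From (\ref{pp:equ12}), (\ref{pp:equ28}) and (\ref{pp:equ29}) I would first extract subsequences (not relabelled) with $\bu^n, \dot{\bu}^n \overset{*}{\rightharpoonup} \bu, \dot{\bu}$ in $L^\infty(0, T; W^{1,2}_*(\Omega)^d)$ and $\ddot{\bu}^n \overset{*}{\rightharpoonup} \ddot{\bu}$ in $L^\infty(0, T; L^2_*(\Omega)^d)$. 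The bound $\|\beps(\dot{\bu}^n + \alpha\bu^n)\|_{L^{n+1}(Q)} \le C$ interpolates to a uniform $L^p(Q)$ bound for each fixed $p < \infty$ once $n \ge p-1$, giving $\dot{\bu}^n + \alpha\bu^n \rightharpoonup \dot{\bu} + \alpha\bu$ weakly in $L^p(0, T; W^{1,p}_*(\Omega)^d)$ for every $p$; passing $p \to \infty$ in the lower-semicontinuous $L^p$-norm estimate for the limit places $\beps(\dot{\bu} + \alpha\bu) \in L^\infty(Q)^{d \times d}$. The Aubin--Lions lemma yields strong convergence of $\bu^n, \dot{\bu}^n$ in $C([0, T]; L^2_*(\Omega)^d)$, implying (\ref{pp:equ5}).

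The crux is the pointwise a.e.\ convergence of $\bbT^n$. I would introduce $T_k(\bbS) := \eta_k(|\bbS|)\bbS$ for a smooth cutoff $\eta_k$ equal to $1$ on $[0, k]$ and vanishing on $[k+1, \infty)$; since $T_k$ is Lipschitz and vanishes outside $\{|\bbS| \le k+1\}$, the weighted bounds (\ref{pp:equ28})--(\ref{pp:equ29}) imply
\begin{equation*}
\|T_k(\bbT^n)\|_{W^{1,2}(Q)} \le C(k, a), \qquad \text{uniformly in } n.
\end{equation*}
Rellich's theorem plus diagonal extraction produces a single subsequence along which $T_k(\bbT^n) \to \phi_k$ a.e.\ in $Q$ for every $k \in \mathbb{N}$. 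The uniform $L^1(Q)$ bound on $\bbT^n$ combined with Fatou's lemma makes $\liminf_n |\bbT^n|$ finite a.e.; the identity $T_k(\bbT^n) = \bbT^n$ on $\{|\bbT^n| \le k\}$ then identifies a pointwise limit $\bbT$ with $\bbT^n \to \bbT$ a.e.\ on $Q$, and Fatou places $\bbT \in L^1(Q)^{d \times d}$.

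With a.e.\ convergence in hand, the constitutive relation follows from the elementary bound $|F_n(\bbS) - F(\bbS)| \le 1/n + |\bbS|^{1/n}/n$: it forces $F_n(\bbT^n) - F(\bbT^n) \to 0$ in every $L^p(Q)$ (using the uniform $L^1$ bound on $\bbT^n$ and the observation that $|\bbT^n|^{p/n} \le 1 + |\bbT^n|$ once $n \ge p$), while $|F| \le 1$ and dominated convergence give $F(\bbT^n) \to F(\bbT)$ in $L^p(Q)$; comparing with the weak limit of $\beps(\dot{\bu}^n + \alpha\bu^n) = F_n(\bbT^n)$ forces $F(\bbT) = \beps(\dot{\bu} + \alpha\bu)$ a.e., which is (\ref{pp:equ4}). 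To pass to the limit in the momentum equation (\ref{pp:equ21}), I would test against $\psi(t)\bv(x)$ for $\psi \in C([0, T])$ and $\bv \in W^{1,2}_*(\Omega)^d$ with $\beps(\bv) \in L^\infty$; the delicate term $\int_Q \bbT^n : \beps(\bv)\psi$ is handled by Vitali's theorem, whose equi-integrability hypothesis on $(\bbT^n)$ in $L^1(Q)$ is extracted from a uniform $L^q(Q)$ integrability bound ($q > 1$) derived by applying a Sobolev embedding to a suitable nonlinear composition of $|\bbT^n|$ controlled by the weighted gradient bound. Uniqueness is proved as in Theorem~\ref{pp:thm1}: test the difference of two weak solutions with $\dot{\bv} + \alpha\bv$, invoke the monotonicity Lemma~\ref{pp:lem2}, and apply Gr\"{o}nwall.

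The main obstacle is the pointwise convergence step: the truncation-compactness construction is exactly calibrated by the weighted Sobolev bounds of Lemma~\ref{pp:lem5}, and without them the sequence $(\bbT^n)$ would only be controlled in the non-reflexive space $L^1(Q)$. A secondary but still delicate point is securing enough integrability of $\bbT^n$ to justify Vitali in the momentum-equation limit, which is where the strain-limiting structure of $F$ must be exploited nontrivially.
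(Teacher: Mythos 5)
Your overall skeleton (extract the $n$-independent bounds, prove a.e.\ convergence of the stresses by a truncation--compactness argument, identify the constitutive relation pointwise, pass to the limit in the momentum equation, prove uniqueness by monotonicity and Gr\"onwall) is the same as the paper's, and you correctly isolate the two hard points. However, both of your proposed resolutions of those hard points have genuine gaps. First, the identification of the a.e.\ limit of $(\bbT^n)$ does not close as written. From $T_k(\bbT^n)\to\phi_k$ a.e.\ for every $k$ together with the a.e.\ finiteness of $\liminf_n|\bbT^n|$ you cannot conclude that the full sequence $\bbT^n(t,x)$ converges: at a point where $\liminf_n|\bbT^n(t,x)|=0$ but some subsequence satisfies $|\bbT^{n_j}(t,x)|\to\infty$, every truncation $T_k(\bbT^{n_j}(t,x))$ vanishes eventually, so $\phi_k(t,x)=0$ for all $k$ is perfectly consistent with your data while $\bbT^n(t,x)$ fails to converge. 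Fatou controls only the $\liminf$, so it does not exclude such points from forming a set of positive measure. The paper avoids this by carrying along the strictly decreasing scalar quantity $s^n=(1+|\bbT^n|)^{-(a+1)}$ (together with $\bbS^n=\bbT^n(1+|\bbT^n|)^{-(a+1)}$) in the Aubin--Lions compactness step; its a.e.\ limit $\overline{s}$ is forced to be positive a.e.\ by Fatou and the uniform $L^1$ bound on $\bbT^n$, and positivity of $\overline{s}$ is exactly what rules out escape of $|\bbT^n|$ to infinity along subsequences. Your construction is repairable in the same spirit (add, say, $\min\{|\bbT^n|,k\}$ or $s^n$ to the truncated family), but as stated the identification step is incomplete. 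A related looseness: the weighted bounds \eqref{pp:equ28}--\eqref{pp:equ29} are proved for the Galerkin approximations $\bbT^{n,m}$, and since $\int_Q|\nabla\bbT^n|^2(1+|\bbT^n|)^{-1-a}$ is not a norm, transferring them to $\bbT^n$ "by weak lower semicontinuity" requires the intermediate step (as in the paper) of passing $m\to\infty$ in genuine norms of nonlinear compositions.

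The more serious gap is in the momentum equation. You propose to verify Vitali's equi-integrability hypothesis by deriving a uniform $L^q(Q)$ bound, $q>1$, on $(\bbT^n)$ from the weighted gradient estimate via a Sobolev embedding. That is precisely the content of Theorem \ref{pp:thm3}, and it is only available for $d=3$ and $a\in(0,\frac{2}{7})$; Theorem \ref{pp:thm2} is asserted for every $d\geq 2$ and every $a>0$, where the stress is controlled only in the non-reflexive space $L^1(Q)$ and no equi-integrability of $(\bbT^n)$ is known. Consequently a.e.\ convergence plus the $L^1$ bound do not justify $\int_Q\bbT^n:\beps(\bv)\psi\to\int_Q\bbT:\beps(\bv)\psi$: mass could concentrate. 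The paper's device is to test \eqref{pp:equ21} with the renormalised test function $\tau_k(|\bbT^n|)\bv\chi$, so that $\tau_k(|\bbT^n|)\bbT^n$ is bounded and converges strongly in every $L^p(Q)$, and then to show that the resulting commutator term $\int_Q\chi\,\bbT^n:\nabla\tau_k(|\bbT^n|)\otimes\bv$ vanishes in the iterated limit $n\to\infty$ followed by $k\to\infty$. That step uses a Cauchy--Schwarz inequality in the inner product induced by $DF(\bbT^n)$, the uniform bound on $\int_Q\nabla\bbT^n\,\vdots\,\nabla F_n(\bbT^n)$ coming from \eqref{pp:equ28}, the growth estimate $|B_k(t)|\leq C(1+t)\chi_{\{t\geq k\}}$, and the fact that $\int_{\{|\bbT|>k\}}(1+|\bbT|)\to0$ because the limit $\bbT$ lies in $L^1(Q)$. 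This renormalisation argument is the key idea missing from your plan; without it your proof only covers the restricted parameter range of Theorem \ref{pp:thm3}.
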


\begin{proof}
The proof of uniqueness is almost identical to that of Theorem \ref{pp:thm1} so we only prove existence here.
First, we would like to show that \( (\bbT^n)_n \) converges pointwise a.e. on \( Q\). To this end, we use an argument similar to one in \cite{RN8} but adapted to the time-dependent problem under consideration here.
We  let \( (\bu^{n,m}, \bbT^{n,m}) \) denote the solution to the Galerkin approximation from \( V_m \)  of the regularised problem with the parameter \( n \), and \((\bu^n, \bbT^n)\) the weak solution of (\ref{pp:equ2}). We define sequences \( (\bbS^{n,m})_m\) and \( (s^{n,m})_m\) by
\begin{align*}
\bbS^{n,m} = \frac{\bbT^{n,m}}{(1 + |\bbT^{n,m}|)^{a+1}} \quad \text{ and }\quad s^{n,m} = \frac{1}{( 1+ |\bbT^{n,m}|)^{a+1}}.
\end{align*}
Trivially, we have
\[
\|\bbS^{n,m}\|_{L^\infty(Q)} + \|s^{n,m}\|_{L^\infty (Q)} \leq 2.
\]
From direct computation, we get
\[
|\dot{\bbS}^{n,m}| + |\dot{s}^{n,m}| \leq C\frac{|\dot{\bbT}^{n,m}|}{(1 + |\bbT^{n,m}|)^{\frac{1}{2} + \frac{a}{2}}},
\]
for a constant \( C\) depending only on \( a\). An analogous result holds for the spatial derivatives. Using (\ref{pp:equ28}) and (\ref{pp:equ29}), it follows that
\begin{align*}
\int_Q |\dot{\bbS}^{n,m}|^2 + |\dot{s}^{n,m}|^2 + |\nabla \bbS^{n,m}|^2 + |\nabla s^{n,m}|^2 \,\mathrm{d}x\,\mathrm{d}t \leq C,
\end{align*}
for every \( m\geq m_0 \), where \( C\) is a constant that is independent of \( m \) and \( n \). By weak compactness, for each fixed \( n \), the sequence \( (\bbS^{n,m})_m \) converges weakly in \( W^{1,2}(0, T; L^2_\#(\Omega)^{d\times d}) \) and \( L^2(0, T; W^{1,2}_\#(\Omega)^{d\times d}) \).  However, by the pointwise convergence of \( (\bbT^{n,m})_m \) to \(\bbT^n\), it follows that the limit of \((\bbS^{n,m})_m \) in the above spaces is \( \bbS^n = \frac{\bbT^n}{(1 + |\bbT^n|)^{a+1}}\). An analogous result holds for \((s^{n,m})_m \) and \( s^n = (1 + |\bbT^n|)^{-a-1}\).
Furthermore, by weak lower semi-continuity of the norm we have
\begin{align*}
\int_Q |\bbS^n|^2 + |\dot{\bbS}^n |^2 + |\nabla \bbS^n|^2 + |s^n|^2 + |\dot{s}^n|^2 + |\nabla s^n|^2 \,\mathrm{d}x\,\mathrm{d}t\leq C,
\end{align*}
where \( C\) is independent of \( n\). Applying the Aubin--Lions lemma, we deduce that \( (\bbS^n)_n \) and \( (s^n)_n \) converge strongly in \( L^2(0, T; L^2_\#(\Omega)^{d\times d}) \) and  \( L^2(0, T; L^2_\#(\Omega)) \), respectively. Thus, up to a subsequence that we do not relabel, the sequences converge pointwise a.e. on \( Q\). Let us call the limits \( \overline{\bbS}\) and \( \overline{s}\), respectively.

However, by Fatou's lemma and (\ref{pp:equ27}), we know that \( \overline{s}^{-1-a}\) is an element of \( L^1(Q)^{d\times d}\). Thus \( \overline{s} > 0 \) a.e. on \( Q\) and we deduce that \( \bbT^n = S^n(s^n)^{-1}\) converges pointwise a.e. on \( Q\) as \( n\rightarrow\infty \). We denote the limit \( \bbT\). By Fatou's lemma, we  have \( \bbT \in L^1(Q)^{d\times d}\).

Thanks to the bounds stated in (\ref{pp:equ27}), (\ref{pp:equ28}) and (\ref{pp:equ29}), we deduce that the following convergence results hold:
\begin{itemize}
\item \( \bu^n \overset{\ast}{\rightharpoonup} \bu\) weakly-* in \( L^\infty(0, T; W^{1,2}_*(\Omega)^d) \);
\item \( \dot{\bu}^n \overset{\ast}{\rightharpoonup} \dot{\bu}\) weakly-* in \( L^\infty(0, T; W^{1,2}_*(\Omega)^d) \);
\item \( \ddot{\bu}^n \overset{\ast}{\rightharpoonup} \ddot{\bu}\) weakly-* in \( L^\infty(0, T; L^2_*(\Omega)^d) \);
\item \( \dot{\bu}^n + \alpha\bu^n {\rightharpoonup} \dot{\bu}+\alpha\bu \) weakly in \( L^p(0, T; W^{1,p}_*(\Omega)^d) \) for every \( p \in [1,\infty) \).
\end{itemize}
The attainment of the initial data (\ref{pp:lem5}) follows immediately by reasoning in much the same way as we did in the proof of Theorem \ref{pp:thm1}. As a matter of fact, we have the following stronger result:
\begin{align*}
\lim_{t\rightarrow 0+} \Big( \|\bu(t) - \bu_0 \|_{1,2} + \|\dot{\bu}(t) - \bv_0 \|_{1,2}\Big) = 0.
\end{align*}

For (\ref{pp:equ4}), we note that \( (F(\bbT^n))_n \) converges pointwise a.e. on \( Q\) to \( F(\bbT)\). Also \( (\beps(\dot{\bu}^n + \alpha\bu^n))_n \) converges weakly in \( L^2(Q)^{d\times d}\) to \( \beps(\dot{\bu}  +\alpha\bu) \). Since pointwise and  weak limits in \( L^2(Q)\) must coincide, (\ref{pp:equ4}) is immediate if we can show that \( (\frac{\bbT^n}{n(1 + |\bbT^n|^{1-\frac{1}{n}})})_n \) converges pointwise a.e. in \( Q\) to \( \mathbf{0}\). However, we have
\begin{align*}
\int_Q \bigg|\frac{\bbT^n}{n(1 + |\bbT^n|^{1-\frac{1}{n}})}\bigg|^2 \,\mathrm{d}x\,\mathrm{d}t
&\leq \int_Q \frac{1}{n^2} + \frac{|\bbT^n|^\frac{2}{n}}{n^2} \chi_{\{ |\bbT^n|\geq 1\}}\,\mathrm{d}x\,\mathrm{d}t
\\
&\leq \frac{|Q|}{n^2 } + \frac{1}{n}\int_Q \frac{|\bbT^n|^{1 + \frac{1}{n}}}{n}\,\mathrm{d}x\,\mathrm{d}t
\\
&
\leq \frac{C}{n},
\end{align*}
where \( C\) is a constant that is independent of \( n \). Hence the sequence converges strongly in \( L^2(Q)^{d\times d}\) (and thus pointwise a.e. in $Q$, up to subsequence) to \( \mathbf{0}\) as \( n\rightarrow\infty \) as required.

It remains to show that (\ref{pp:equ3}) holds.
We use similar reasoning to that in \cite{RN6} but we repeat all the details for completeness.
Let \( \tau \in C^1_c(\mathbb{R}) \), \( \bv\in C^1_*(\overline{\Omega})^d\) and \( \chi \in C([0, T]) \). Since \( \ddot{\bu}^n(t) \) and \( \boldf(t) \) are elements of \( L^2_*(\Omega)^d\), we can use any  \( \bv\in W^{n+1}_\#(\Omega)^d \) as a test function in (\ref{pp:equ21}), rather than only those with integral over \( \Omega\) equal to \( 0 \). Thus \( \tau(|\bbT^n|) \bv \chi \) is a valid test function in (\ref{pp:equ21}). We integrate the result over \( (0, T)\) to get
\begin{align*}
0 & = \int_Q \ddot{\bu}^n \cdot \chi \bv \tau(|\bbT^n|) + \bbT^n:\beps(\chi \bv\tau(|\bbT^n|)) - \boldf \cdot (\chi \bv \tau (|\bbT^n|) \,\mathrm{d}x\,\mathrm{d}t
\\
&= \int_Q\ddot{\bu}^n \cdot \chi \bv \tau(|\bbT^n|) +\tau(|\bbT^n|)\chi \bbT^n :\beps(\bv) - \boldf \cdot (\chi \bv \tau (|\bbT^n|)
+ \chi \bbT^n :\nabla \tau(|\bbT^n|)\otimes \bv\,\mathrm{d}x\,\mathrm{d}t.
\end{align*}
Using the compactness of the  support of \( \tau \) and the pointwise convergence of \( (\bbT^n)_n \), we know that \( \tau(|\bbT^n|) \rightarrow\tau(|\bbT|) \) and \( \tau(|\bbT^n|)\bbT^n \rightarrow \tau(|\bbT|) \bbT\)  strongly in \( L^p(Q)\) for every \( p \in [1,\infty) \). Thus we have
\begin{align*}
&\int_Q \ddot{\bu}\cdot \chi \bv \tau( |\bbT|) + \chi\tau(|\bbT|)\bbT:\beps(\bv) - \chi \tau(|\bbT|)\boldf \cdot \bv\,\mathrm{d}x\,\mathrm{d}t
\\
&\quad\quad
=\lim_{n\rightarrow\infty}\int_Q\ddot{\bu}^n \cdot \chi \bv \tau(|\bbT^n|) +\tau(|\bbT^n|)\chi \bbT^n :\beps(\bv) - \boldf \cdot (\chi \bv \tau (|\bbT^n|)\,\mathrm{d}x\,\mathrm{d}t
\\
&\quad\quad
= -\lim_{n\rightarrow\infty} \int_Q \chi \bbT^n :\nabla \tau(|\bbT^n|) \otimes \bv\,\mathrm{d}x\,\mathrm{d}t.
\end{align*}
We replace \( \tau \) by \( \tau_k \in C^1_c(\mathbb{R}) \) where \( \tau_k \) is such that
\begin{align*}
\tau_k(s) = \begin{cases}
1, &\quad |s|\leq k,
\\
0, &\quad |s|\geq 2k,
\end{cases}
\end{align*}
and \( |\tau_k^\prime(s) |\leq \frac{C}{k}\) for a constant \( C\) that is independent of \( k \). Letting \( k\rightarrow\infty \) and using Lebesgue's dominated convergence theorem, we deduce that
\begin{equation}\label{pp:equ35}
\int_Q \ddot{\bu}\cdot \chi \bv + \chi \bbT:\beps(\bv) - \chi \boldf \cdot \bv \,\mathrm{d}x\,\mathrm{d}t = -\lim_{k\rightarrow\infty }\lim_{n\rightarrow\infty} \int_Q \chi \bbT^n :\nabla \tau_k(|\bbT^n|) \otimes \bv\,\mathrm{d}x\,\mathrm{d}t.
\end{equation}
The term inside the limit on the right-hand side can be rewritten as
\begin{align*}
\int_Q \chi \bbT^n :\nabla \tau_k(|\bbT^n|) \otimes \bv\,\mathrm{d}x\,\mathrm{d}t
= \int_Q \chi b(|\bbT^n|) F(\bbT^n)_{ij} \frac{\partial}{\partial x_j}\Big(\tau_k(|\bbT^n|)\Big) v_i \,\mathrm{d}x\,\mathrm{d}t,
\end{align*}
where \( b\) is defined on \( [0, \infty ) \) by \( b(t) = (1 + t^a)^{\frac{1}{a}}\). Now define \( B_k \) on \( [0, \infty ) \) by
\[
B_k(t) = \int_0^t b(s) \tau_k^\prime(s) \,\mathrm{d}s.
\]
With this is mind, we get
\begin{align*}
&\int_Q \chi b(|\bbT^n|) F(\bbT^n)_{ij} \frac{\partial}{\partial x_j}\Big(\tau_k(|\bbT^n|)\Big) v_i \,\mathrm{d}x\,\mathrm{d}t
\\
&\quad\quad
= \int_Q \chi b(|\bbT^n|) \tau^\prime_k (|\bbT^n|) \frac{\partial}{\partial x_j}\Big( |\bbT^n|\Big) F(\bbT^n)_{ij}v_i\,\mathrm{d}x\,\mathrm{d}t
\\
&\quad\quad
= \int_Q \chi \frac{\partial}{\partial x_j}\Big( B_k(|\bbT^n|) \Big) F(\bbT^n)_{ij} v_i \,\mathrm{d}x\,\mathrm{d}t
\\
&\quad\quad
=
- \int_Q \chi B_k(|\bbT^n|) F(\bbT^n)_{ij}\frac{\partial v_i}{\partial x_j} + \chi B_k(|\bbT^n|) \frac{\partial}{\partial x_j}\Big( F(\bbT^n)_{ij}\Big) v_i \,\mathrm{d}x\,\mathrm{d}t
\\
&\quad\quad
=
- \int_Q \chi B_k(|\bbT^n|) F(\bbT^n)_{ij}\frac{\partial v_i}{\partial x_j} + \chi B_k(|\bbT^n|)\mathcal{A}_{ijpq}(\bbT^n) \frac{\partial}{\partial x_j}\Big( T^n_{pq}\Big) \,\mathrm{d}x\,\mathrm{d}t,
\end{align*}
where \( \mathcal{A}(\bbT) \) is the fourth-order tensor defined by
\[
\mathcal{A}_{ijpq}(\bbT) = \frac{\partial}{\partial T_{pq}} \left( \frac{T_{ij}}{(1 + |\bbT|^a)^{\frac{1}{a}}}\right) .
\]
We note that we can define an inner product on \( \mathbb{R}^{d\times d}\) by
\[
(\bbS, \mathbf{U})_{\mathcal{A}(\bbT) } = \sum_{i,j,p,q = 1}^d \mathcal{A}_{ijpq}(\bbT) S_{ij}U_{pq}.
\]
Using this definition, it follows that
\begin{align*}
&\Big|\int_Q \chi \bbT^n :\nabla \tau_k(|\bbT^n|) \otimes \bv\,\mathrm{d}x\,\mathrm{d}t\Big|
\\
&\quad\quad
\leq \int_Q |\chi B_k(|\bbT^n|) ||F(\bbT^n)||\nabla\bv| + \Big| \Big(\Big( \delta_{jl} B_k(|\bbT^n|) \frac{v_i }{|\bv|} \Big)_{i,j}, \chi |\bv|\partial_l \bbT^n \Big)_{\mathcal{A}(\bbT^n)}\Big|\,\mathrm{d}x\,\mathrm{d}t
\\
&\quad\quad
\leq \Big( \int_Q \Big(\Big(\delta_{jl} B_k(|\bbT^n|) \frac{v_i }{|\bv|} \Big)_{i,j}, \delta_{jl} B_k(|\bbT^n|) \frac{v_i }{|\bv|} \Big)_{i,j}\Big)_{\mathcal{A}(\bbT^n)}\,\mathrm{d}x\,\mathrm{d}t\Big)^{\frac{1}{2}}
\\
&\quad\quad\quad\quad
\cdot \Big( \int_Q (\chi|\bv|\partial_l \bbT^n, \chi|\bv|\partial_l \bbT^n)_{\mathcal{A}(\bbT^n)}\,\mathrm{d}x\,\mathrm{d}t\Big)^{\frac{1}{2}}
+ \int_Q |\chi B_k(|\bbT^n|)||F(\bbT^n)||\nabla \bv|\,\mathrm{d}x\,\mathrm{d}t.
\end{align*}
Thanks to the definition of \( \mathcal{A}(\bbT) \), we have
\begin{align*}
\int_Q (\chi |\bv|\partial_l \bbT^n, \chi |\bv|\partial_l \bbT^n)_{\mathcal{A}(\bbT^n)}\,\mathrm{d}x\,\mathrm{d}t
&\leq
\int_Q \chi^2 |\bv|^2 \nabla \bbT^n\,\vdots\, \nabla F (\bbT^n) \,\mathrm{d}x\,\mathrm{d}t
\\
&\leq
\int_Q \chi^2 |\bv|^2 \nabla \bbT^n\,\vdots\, \nabla F_n(\bbT^n) \,\mathrm{d}x\,\mathrm{d}t
\\
&\leq C(\chi, \bv) \int_Q \nabla \bbT^n\,\vdots \,\nabla F_n(\bbT^n) \,\mathrm{d}x\,\mathrm{d}t
\\
&\leq C,
\end{align*}
where \( C\) is a positive constant that is independent of \( n \).
Furthermore, we have
\begin{equation}\label{pp:equ36}
\begin{aligned}
&\int_Q \Big(\Big(\delta_{jl} B_k(|\bbT^n|) \frac{v_i }{|\bv|} \Big)_{i,j}, \delta_{jl} B_k(|\bbT^n|) \frac{v_i }{|\bv|} \Big)_{i,j}\Big)_{\mathcal{A}(\bbT^n)}\,\mathrm{d}x\,\mathrm{d}t
\\
&\quad\quad
\leq C(d) \int_Q \frac{|B_k(|\bbT^n|)|^2}{( 1 + |\bbT^n|^a)^{\frac{1}{a}}}\,\mathrm{d}x\,\mathrm{d}t,
\end{aligned}
\end{equation}
where \( C(d) \) is a positive constant depending only on \( d\).
By the choice of \( \tau_k \), we have \( B_k (t) = 0 \) if \( t\leq k \). If \( t\geq k\), we have
\begin{align}\label{pp:equ37}
\begin{aligned}
|B_k(t) | &= \Big| \int_k^t \tau^\prime_k(s)b(s) \,\mathrm{d}s \Big| \\
&\leq \frac{C}{k}\int_k^{2k} b(s) \,\mathrm{d}s \\
&\leq C(1 + k) \\
&\leq C(1 + t),
\end{aligned}
\end{align}
where \( C\) is independent of \( k \) and \( t\). Furthermore, \( B_k\) is uniformly bounded on \( [0,\infty) \). Using this and (\ref{pp:equ37}), taking the limit in (\ref{pp:equ36}) we get
\begin{align*}
&\lim_{k\rightarrow\infty}\lim_{n\rightarrow\infty} \int_Q \Big(\Big(\delta_{jl} B_k(|\bbT^n|) \frac{v_i }{|\bv|} \Big)_{i,j}, \delta_{jl} B_k(|\bbT^n|) \frac{v_i }{|\bv|} \Big)_{i,j}\Big)_{\mathcal{A}(\bbT^n)}\,\mathrm{d}x\,\mathrm{d}t
\\
&\quad\quad
\leq \lim_{k\rightarrow\infty}C\int_Q \frac{|B_k(|\bbT|)|^2}{(1 + |\bbT|^a)^{\frac{1}{a}}}\,\mathrm{d}x\,\mathrm{d}t
\\
&\quad\quad
\leq \lim_{k\rightarrow\infty} C\int_{\{|\bbT|> k\}} 1 + |\bbT|\,\mathrm{d}x\,\mathrm{d}t
\\
&\quad\quad
= 0,
\end{align*}
where the transition to the final line follows from the fact that \( \bbT \in L^1(Q)^{d\times d}\).
In particular, we deduce that
\begin{equation}\label{pp:equ38}
\lim_{k\rightarrow\infty}\lim_{n\rightarrow\infty}\int_Q \Big| \Big(\Big( \delta_{jl} B_k(|\bbT^n|) \frac{v_i }{|\bv|} \Big)_{i,j}, \chi |\bv|\partial_l \bbT^n \Big)_{\mathcal{A}(\bbT^n)}\Big|\,\mathrm{d}x\,\mathrm{d}t = 0.
\end{equation}
By similar reasoning, we have
\begin{equation}\label{pp:equ39}
\begin{aligned}
\lim_{k\rightarrow\infty}\lim_{n\rightarrow\infty} \int_Q |B_k(|\bbT^n|) ||F(\bbT^n)||\nabla \bv|\,\mathrm{d}x\,\mathrm{d}t
&= \lim_{k\rightarrow\infty}\int_Q |B_k(|\bbT|) ||F(\bbT)||\nabla \bv|\,\mathrm{d}x\,\mathrm{d}t
\\
&\leq \lim_{k\rightarrow\infty} C(\bv) \int_{\{ |\bbT|> k\}} 1 + |\bbT|\,\mathrm{d}x\,\mathrm{d}t
\\
&= 0.
\end{aligned}
\end{equation}
Combining (\ref{pp:equ38}) and (\ref{pp:equ39}), it follows that
\begin{align*}
\lim_{k\rightarrow\infty}\lim_{n\rightarrow\infty} \int_Q \chi\bbT^n:\nabla \tau_k(|\bbT^n|) \otimes \bv\,\mathrm{d}x\,\mathrm{d}t = 0.
\end{align*}
Returning to (\ref{pp:equ35}) and noting that \( \chi \) is arbitrary, we deduce that
\begin{equation}\label{pp:equ40}
\int_\Omega \ddot{\bu}(t) \cdot \bv + \bbT(t) : \beps(\bv) \,\mathrm{d}x = \int_\Omega \boldf \cdot \bv\,\mathrm{d}x,
\end{equation}
for a.e. \( t\in (0, T) \) and every \( \bv\in C^1_*(\overline{\Omega})^d\). Using Lemma \ref{pp:lem3}, it follows that (\ref{pp:equ40}) holds for every \( \bv\in W^{1,2}_*(\Omega)^d\) such that \( \beps(\bv) \in L^\infty_\#(\Omega)^d\). Hence \( (\bu,\bbT) \) is a weak solution of the strain-limiting problem  (\ref{pp:equ1}) and the proof is complete.
\end{proof}

Although we have a full existence result from Theorem \ref{pp:thm2}, under the condition that \( a\) is small and the dimension is \( 3\) we can improve the convergence result of Theorem \ref{pp:thm2} for \( (\bbT^n)_n\).

\begin{theorem}\label{pp:thm3}
Let \(\alpha>0 \), \(a \in (0, \frac{2}{7}) \) and \( d = 3\). Assume that \( \bu_0 \), \( \bv_0 \in W^{1,2}_*(\Omega)^d\) are such that \( \bv_0 + \alpha\bu_0 \in W^{k+1,2}_*(\Omega)^d\) for some \( k > \frac{d}{2}\) with
\begin{align*}
\|\beps(\bv_0 + \alpha\bu_0 )\|_\infty \leq C_* < 1.
\end{align*}
Suppose that \( \boldf\in  L^2(0, T; W^{1,2}_*(\Omega)^d) \).  Let \( (\bu^n, \bbT^n) \) be the unique weak solution of the regularised problem (\ref{pp:equ2}). Then there exists a couple \( (\bu, \bbT) \), the unique weak solution of  (\ref{pp:equ1}), such that
\[
\bbT^n \rightarrow\bbT \quad \text{ strongly in }L^1(0, T; L^1_\#(\Omega)^{d\times d}) .
\]
\end{theorem}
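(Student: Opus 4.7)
The strategy is to promote the pointwise a.e.\ convergence $\bbT^n \to \bbT$ on $Q$ established in Theorem~\ref{pp:thm2} to convergence in $L^1(Q)^{d\times d}$ via Vitali's convergence theorem. Since $|Q| < \infty$, it is enough to show that $(\bbT^n)_n$ is equi-integrable on $Q$, and the natural route is to secure a uniform-in-$n$ bound in a reflexive Lebesgue space $L^{1+\delta}(Q)^{d\times d}$ for some $\delta > 0$ depending only on $a$.

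The key auxiliary quantity I would track is $\psi^n := (1+|\bbT^n|)^{(1-a)/2}$. From (\ref{pp:equ27}) together with the trivial inequality $(1+s)^{1-a} \le C(1+s^{1-a}\chi_{\{s\ge 1\}})$ for $a \in (0,1)$, one obtains
\[
\|\psi^n(t)\|_{L^2(\Omega)}^2 = \int_\Omega (1+|\bbT^n(t)|)^{1-a}\,\mathrm{d}x \le C,
\]
uniformly in $t \in [0,T]$ and $n \in \mathbb{N}$. The chain rule combined with $|\nabla |\bbT^n|| \le |\nabla \bbT^n|$ yields the pointwise bound $|\nabla \psi^n|^2 \le \frac{(1-a)^2}{4}\, |\nabla \bbT^n|^2 / (1+|\bbT^n|)^{1+a}$, so (\ref{pp:equ28})---which holds for the Galerkin iterates $\bbT^{n,m}$ uniformly in $m$ and transfers to the weak limit $\bbT^n$ by pointwise a.e.\ convergence $\bbT^{n,m}\to\bbT^n$ and Fatou's lemma---controls $\nabla \psi^n$ in $L^2(Q)^d$ uniformly in $n$. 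Thus $(\psi^n)_n$ is uniformly bounded in $L^\infty(0,T; L^2_\#(\Omega)) \cap L^2(0,T; W^{1,2}_\#(\Omega))$.

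In dimension $d=3$, the Sobolev embedding $W^{1,2}_\#(\Omega) \hookrightarrow L^6(\Omega)$ gives $\psi^n \in L^2(0,T; L^6(\Omega))$, and H\"older interpolation with $L^\infty(0,T; L^2(\Omega))$ yields the classical parabolic embedding $\psi^n \in L^{10/3}(Q)$ uniformly in $n$. Unwinding the definition of $\psi^n$, the sequence $\bbT^n$ is then uniformly bounded in $L^{5(1-a)/3}(Q)^{d \times d}$. Under the restriction $a \in (0, \tfrac{2}{7})$, this exponent strictly exceeds $1$ with enough margin that $\delta := 5(1-a)/3 - 1$ provides a quantitative closure of equi-integrability: for any measurable $E \subset Q$, H\"older's inequality delivers $\int_E |\bbT^n|\,\mathrm{d}x\,\mathrm{d}t \le |E|^{\delta/(1+\delta)} \|\bbT^n\|_{L^{1+\delta}(Q)} \le C|E|^{\delta/(1+\delta)}$, which vanishes uniformly in $n$ as $|E| \to 0$. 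Vitali's convergence theorem combined with the pointwise a.e.\ convergence from Theorem~\ref{pp:thm2} then yields $\bbT^n \to \bbT$ strongly in $L^1(0,T; L^1_\#(\Omega)^{d\times d})$.

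The main obstacle I anticipate is the careful passage to the limit $m\to\infty$ when transferring the gradient estimate (\ref{pp:equ28}) to the weak-limit solution $\bbT^n$ with constants independent of $n$: the functional $\bbT \mapsto \int_Q |\nabla \bbT|^2/(1+|\bbT|)^{1+a}$ is not convex in $\bbT$, but its reformulation (up to a multiplicative constant) as $\int_Q |\nabla \psi(\bbT)|^2$ is a convex, weakly $L^2$-lower semi-continuous functional of $\nabla\psi(\bbT)$, so the transfer can be executed provided one has sufficient pointwise control to identify $\psi(\bbT^{n,m})\to\psi(\bbT^n)$. Once this passage to the limit is accomplished, the parabolic embedding and the Vitali closure are essentially mechanical, and the restriction $a \in (0,\tfrac{2}{7})$---rather than the looser $a < \tfrac{2}{5}$ that a naive count of exponents suggests---reflects additional slack that the authors reserve to absorb lower-order contributions from the regularisation term $\bbT^n/[n(1+|\bbT^n|^{1-1/n})]$ in (\ref{pp:equ2})$_2$.
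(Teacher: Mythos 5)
Your proposal is correct, and its core estimate coincides with the paper's: both derive a uniform $L^{1+\delta}(Q)$ bound on the approximate stresses from the two ingredients $\sup_{t}\int_\Omega|\bbT^{n,m}(t)|^{1-a}\dd x\leq C$ and $\int_Q|\nabla\bbT^{n,m}|^2(1+|\bbT^{n,m}|)^{-1-a}\dd x\dd t\leq C$ (i.e.\ (\ref{pp:equ41})), combined with the Sobolev embedding in $d=3$. The execution differs in two places. First, the paper performs a pointwise-in-time H\"older interpolation (\ref{pp:equ60}) between $L^{1-a}(\Omega)$ and $L^{p(1-a)/2}(\Omega)$ with $p=2q\leq 6$, and the constraints (\ref{pp:equ43})--(\ref{pp:equ44}) are what force $a\leq\frac{2}{7}$; your parabolic interpolation $L^\infty(0,T;L^2)\cap L^2(0,T;L^6)\hookrightarrow L^{10/3}(Q)$ applied to $\psi^n=(1+|\bbT^n|)^{(1-a)/2}$ uses the same two bounds but bookkeeps the exponents more efficiently, yielding $\bbT^n$ bounded in $L^{5(1-a)/3}(Q)$, an exponent exceeding $1$ for all $a<\frac{2}{5}$. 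Consequently your closing speculation is off: the threshold $\frac{2}{7}$ is not slack reserved to absorb the regularisation term (that term only ever helps, contributing nonnegative quantities throughout), but an artefact of the particular choice of $(p,q)$ in the paper's interpolation; your argument in fact establishes the $L^{1+\delta}$ bound on the larger range $a\in(0,\frac{2}{5})$. Second, for the final upgrade to strong convergence in $L^1(Q)$ the paper reruns the monotonicity argument (the $Q^m_k$-splitting) from the proof of Theorem \ref{pp:thm1}, whereas you invoke Vitali's theorem together with the pointwise a.e.\ convergence $\bbT^n\to\bbT$ already secured in Theorem \ref{pp:thm2}; both are valid, and yours is the more economical given what Theorem \ref{pp:thm2} provides. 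Finally, the obstacle you anticipate---transferring the weighted gradient bound from the Galerkin level to $\bbT^n$---is sidestepped in the paper by deriving the $L^{1+\delta}$ bound directly for $\bbT^{n,m}$ and passing to the limit via Fatou's lemma and the pointwise convergence $\bbT^{n,m}\to\bbT^n$; you could do the same and never transfer the gradient estimate at all.
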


\begin{proof}
Under the conditions of the theorem, if \( (\bu^{n,m},\bbT^{n,m}) \) denotes the solution to the Galerkin approximation from \( V_m\), then we have
\begin{equation}\label{pp:equ41}
\sup_{t\in[0, T]}\Big( \int_\Omega |\bbT^{n,m}(t) |^{1-a}\,\mathrm{d}x \Big) + \int_Q \frac{|\nabla \bbT^{n,m}|^2}{( 1 + |\bbT^{n,m}|)^{a + 1}}\,\mathrm{d}x\,\mathrm{d}t \leq C,
\end{equation}
for every \( m \geq m_0 \), where \( C\) is independent of \( n \) and \( m \).
We no longer require the indicator function in the first term on the left-hand side because \( a < 1\).
From now on, we will assume that \( m\geq m_0 \).
Using the Sobolev embedding theorem, for every \( p \in (2,6 ]\) we have
\begin{equation}\label{pp:equ42}
\begin{aligned}
\int_0^T \Big( \int_\Omega|\bbT^{n,m}|^{\frac{p(1-a)}{2}}\,\mathrm{d}x\Big)^{\frac{2}{p}}\,\mathrm{d}t \leq C(p, a) \Big( \int_Q \frac{|\nabla \bbT^{n,m}|^2}{( 1 + |\bbT^{n,m}|)^{a + 1}}\,\mathrm{d}x\,\mathrm{d}t  + 1\Big).
\end{aligned}
\end{equation}
We note that we can choose \( p \) sufficiently large from  the interval \( (2,6]\) so that \( p(1-a)/2 \) is greater than \( 1\). We may take \( p \) larger if needed in subsequent calculations, but \( p\) will remain bounded by \( 6\).

Let \( q\in (1,\infty) \), to be determined later. Let \( q^\prime = \frac{q}{q-1}\) denote the H\"{o}lder conjugate of \( q\). Using H\"{o}lder's inequality, for a.e. \( t\in (0, T) \), we have
\begin{equation}\label{pp:equ60}
\begin{aligned}
\int_\Omega |\bbT^{n,m}|^{ 1+ \frac{a}{q^\prime}}\,\mathrm{d}x
&\leq \Big( \int_\Omega|\bbT^{n,m}|^{1-a}\,\mathrm{d}x\Big)^{\frac{1}{q^\prime}}\Big( \int_\Omega |\bbT^{n,m}|^{ 1 + 2a(q-1) }\,\mathrm{d}x \Big)^{\frac{1}{q}}
\\
&\leq C^{\frac{1}{q^\prime}} \Big( \int_\Omega |\bbT^{n,m}|^{ 1 + 2a(q-1)}\,\mathrm{d}x\Big)^{\frac{1}{q}},
\end{aligned}
\end{equation}
where the constant \( C\) comes from the first term on the left-hand side of (\ref{pp:equ41}). To bound the other factor on the right-hand side of (\ref{pp:equ60}), we would like to use (\ref{pp:equ42}). A pair of simultaneous restrictions on \( p \) and \( q\) that enable this are
\begin{equation}\label{pp:equ43}
\frac{1}{q}\leq \frac{2}{p},
\end{equation}
and
\begin{equation}\label{pp:equ44}
1 + 2a(q-1) \leq \frac{p(1-a)}{2}.
\end{equation}
To show that we may find  \( p \in (2,6]\) and \( q> 1\) such that the above holds when \( a\in (0, \frac{2}{7}]\), we first set \( p = 2q\). Then (\ref{pp:equ43}) automatically holds. We will now demand that \( q\in (1,3]\)  because  \( p \in (2,6]\).

We write \( a = \frac{1}{3}- \delta\) for a \( \delta\in (0, \frac{1}{3}) \). Then (\ref{pp:equ44}) reduces to
\[
\frac{1}{3\delta} + 2 \leq 3q.
\]
If \( \delta\geq \frac{1}{21}\), then \( \frac{1}{3\delta} + 2\leq 9\) and \( q\) can be chosen sufficiently large from the interval \( (1,3]\) so that
\[
\frac{1}{3\delta} + 2\leq 3q\leq 9.
\]
Thus (\ref{pp:equ44}) holds if \( \delta\geq  \frac{1}{21}\), which is simply equivalent to \( a\leq \frac{2}{7}\). Thus we have that
\begin{align*}
\int_Q |\bbT^{n,m}|^{1 + \frac{a}{q^\prime}}\,\mathrm{d}x\,\mathrm{d}t
&\leq C^{\frac{1}{q^\prime}}\int_0^T\Big( \int_\Omega |\bbT^{n,m}|^{1 + 2a(q-1)}\,\mathrm{d}x \Big)^{\frac{1}{q}}\,\mathrm{d}t
\\
&\leq C\Big[ \int_0^T \Big( \int_\Omega|\bbT^{n,m}|^{\frac{p(1-a)}{2}}\,\mathrm{d}x\Big)^{\frac{1}{q}}\,\mathrm{d}t + 1\Big]
\\
&\leq C\Big[\int_0^T \Big( \int_\Omega|\bbT^{n,m}|^{\frac{p(1-a)}{2}}\,\mathrm{d}x\Big)^{\frac{2}{p}}\,\mathrm{d}t + 1\Big]
\\
&\leq C(a,\alpha, \bu_0, \bv_0,\boldf,  \Omega, T),
\end{align*}
where \( C\) is a positive constant that is independent of \( n\) and \( m \). We note that the choices of \( p \) and \( q\) were made independent of \( n\) and \( m\). Fix \( \delta = \frac{a}{q^\prime} >0 \). Then, for a constant \( C\) that is independent of \(n\) and \( m\), we get
\[
\int_Q |\bbT^{n,m}|^{1 + \delta}\,\mathrm{d}x\,\mathrm{d}t \leq C.
\]
Using the convergence results in Theorems \ref{pp:thm1}, we deduce that
\[
\int_Q |\bbT^n|^{1 + \delta}\,\mathrm{d}x\,\mathrm{d}t.
\]
It follows that \( \bbT^n \rightharpoonup \bbT \) weakly in \(L^{1 + \delta}(Q)^{d\times d}\). Reasoning as we did in the proof of Theorem \ref{pp:thm1} and noting that \( (\frac{\bbT^n}{n(1 + |\bbT^n|^{1-\frac{1}{n}} ) })_n \) converges to \( \mathbf{0}\) strongly in \( L^2(Q)^{d\times d}\) as \( n\rightarrow\infty\), we deduce that \( \bbT^n\rightarrow\bbT \) strongly in \( L^1(0, T; L^1_\#(\Omega)^{d\times d}) \) as \(n\rightarrow\infty \). Using these two facts, we may repeat the reasoning from the proof of Theorem \ref{pp:thm1}  to deduce that \( (\bu, \bbT) \) is a weak solution of (\ref{pp:equ1}) under these weaker conditions.
\end{proof}

\begin{remark} \emph{
We remark that the results here hold under lower regularity requirements on the data. Indeed, rather than \( \bv_0 + \alpha\bu_0 \in W^{k+1,2}_*(\Omega)^d\) for some \( k > \frac{d}{2}\), we need only assume that \( \bv_0 + \alpha\bu_0 \in W^{2,2}_*(\Omega)^d\). However, in this case we need an extra level of approximation.
This is required to deal with the convergence of \( (\beps(P^m(\bv_0 + \alpha\bu_0)))_m \).
We consider the following regularised problem:
\begin{equation}\label{pp:equ45}
\begin{aligned}
\bu_{tt}&= \mathrm{div}(\bbT) + \boldf, &\quad &\text{ in }Q,
 \\
 \beps(\bu_t + \alpha\bu) &= \frac{\bbT}{(1 + |\bbT|^a)^{\frac{1}{a}}} + \frac{\bbT}{n} &\quad &\text{ in }Q,
 \\
 \bu(0, \cdot) &= \bu_0^k,  &\quad &\text{ in }\Omega,
 \\
 \bu_t(0, \cdot) &= \bv_0^k, &\quad &\text{ in }\Omega.
\end{aligned}
\end{equation}
The functions \( \bu_0^k\), \( \bv_0^k\) are chosen so that they are elements of \( C^\infty_*(\overline{\Omega})^d\) such that if \( \bu_0 \), \( \bv_0 \in L^2_*(\Omega)^d\) with \( \bv_0 + \alpha\bu_0 \in W^{2,2}_*(\Omega)^d\cap W^{1,\infty}_*(\Omega)^d\), then
\begin{itemize}
\item \( \bu_0^k \rightarrow\bu_0 \) strongly in \( L^2_*(\Omega)^d\);
\item \( \bv_0^k \rightarrow\bv_0 \) strongly in \( L^2_*(\Omega)^d\);
\item \( \bv_0^k + \alpha\bu_0^k \rightarrow\bv_0 + \alpha\bu_0 \) strongly in \( W^{2,2}_*(\Omega)^d\);
\item \( \beps(\bv_0^k + \alpha\bu_0^k) \overset{\ast}{\rightharpoonup}\beps(\bv_0 + \alpha\bu_0) \) weakly-* in \( L^\infty(\Omega)^{d\times d}\).
\end{itemize}
We deduce that such an approximating sequence exists by applying the proof of Lemma 4.1 in \cite{RN8} to \( \bu_0 \) and \( \bv_0 \). We deduce  the existence of a weak solution to (\ref{pp:equ45}) by using a finite-dimensional approximation as in Theorem \ref{pp:thm1}. Then we take the limit as \( k\rightarrow\infty \) for each fixed \( n\). Rather than considering \( f^{-1}(C_1) \) as we did in the proof of Theorem \ref{pp:thm1}, we use \( f_n^{-1}(\|\beps(\bv_0^k + \alpha\bu_0^k ) \|_\infty ) \) in the bounds on \( (\bu^{n,k}, \bbT^{n,k})\). This is uniformly bounded in \( k \), so we may deduce appropriate convergence results for each fixed \( n \). However, since we only have weak-* convergence of \( (\beps(\bv_0^k  + \alpha\bu_0^k))_k \), we must obtain bounds of the type (\ref{pp:equ19}) and (\ref{pp:equ29}) for \( (\bu^n, \bbT^n)\) from scratch. Since we have chosen the regularisation \(n^{-1}\bbT\), we have the following bound:
\begin{align*}
\int_Q \Big( |\dot{\bbT}^{n,k}|^2 + |\nabla \bbT^{n,k}|^2 + |\bbT^{n,k}|^2 \Big) \,\mathrm{d}x\,\mathrm{d}t \leq C(n),
\end{align*}
where \( C\) is a constant that is independent of \( k \) and \( (\bu^{n,k}, \bbT^{n,k}) \) is the weak solution of (\ref{pp:equ45}). Thus if \( (\bu^n,\bbT^n)\) is the solution of (\ref{pp:equ45}) but with \( \bu_0^k\), \( \bv_0^k \) replaced by \( \bu_0 \), \( \bv_0 \), then we must have \( \bbT^n \in C([0, T]; L^2(\Omega)^{d\times d}) \). Using this fact, we can derive estimates similarly as for the finite-dimensional solution in the proof of Theorem \ref{pp:thm1}. Since we are no longer using approximate initial data, we may use \( f^{-1}(C_*) \) in the bounds that are obtained.}\quad $\diamond$
\end{remark}

\begin{remark}
\emph{
We note that we may consider more general constitutive relations than the function \( F\). In particular, the above proofs can be adapted to a relation of the form}
\[
\beps(\bu_t + \alpha\bu) = f(|\bbT|) \bbT,
\] 
\emph{where \( f: [0,\infty) \rightarrow[0,\infty) \) is a continuously differentiable function and there exist positive constants \( C_1\), \( C_2\), \( \kappa\) and \( a\) such that}
\begin{equation}\label{pp:equ46}
\frac{C_1 s^2 }{\kappa + s} \leq f(s) s^2 \leq C_2 s
\end{equation}
\emph{and}
\begin{equation}\label{pp:equ47}
\frac{\mathrm{d}}{\mathrm{d}s}\Big( f(s) s\Big) \geq \frac{C_1}{(\kappa + s)^{a+1}},
\end{equation}
\emph{for every \( s\geq 0 \). Similarly, in the spirit of \cite{RN9}, we could also consider relations of the form
\begin{align*}
\beps(\bu_t + \alpha\bu) = \lambda(|\tr\bbT|)(\tr\bbT) \bbI + \mu(|\bbTd|) \bbTd,
\end{align*}
where \( \mu\) and \( \lambda\) satisfy conditions similar to (\ref{pp:equ46}) and (\ref{pp:equ47}).} \quad $\diamond$
\end{remark}

\section{Conclusion and open problems}\label{pp:secconclusion}
We have proved the existence of a unique global-in-time large-data weak solution to a class of initial-boundary-value problems that model the motion of implicitly constituted strain-limiting viscoelastic solids, without any restrictions on the parameters in the model or the number of space dimensions. Related studies have only been pursued to date in the quasi-static case or in one space dimension. The results presented here complement those for elastic solids in the static case discussed in \cite{RN8}.
Admittedly, our focus on the periodic setting here is a notable simplification. The extension of the results presented in this paper to problems on general domains, with Dirichlet and mixed Dirichlet--Neumann boundary conditions will be reported in a forthcoming paper.

An interesting further challenge is the proof of the existence of weak solutions to the purely elastic unsteady strain-limiting model. Although the analysis presented in this paper suggests that such an existence result might be within reach, it has to be borne in mind that the viscoelastic term has a regularising effect, whose absence in the purely elastic case is likely to lead to additional technical complications.

Finally, a related class of problems that has been discussed in the literature are stress-rate type models. For these the constitutive relation is of the form
\begin{equation}\label{pp:equ57}
\beps(\bu) = F(\bbT) - \gamma\bbT_t,
\end{equation}
where \( F\) is an appropriately chosen  function and \( \gamma > 0 \). In the case when \( F\) is bounded, it would be interesting to investigate the existence and uniqueness of solutions to (\ref{pp:equ1}), where the constitutive relation is now replaced by (\ref{pp:equ57}). Rudimentary  comparisons of solutions to the stress-rate problem and the strain-rate problem are performed in \cite{RN122}, but results concerning the existence of global weak solutions to multidimensional stress-rate type models are still lacking.

\section*{Acknowledgements}
M. Bul\'{\i}\v{c}ek's work is supported by the project 20-11027X financed by GA\v{C}R. M. Bul\'{\i}\v{c}ek is a member of the Ne\v{c}as Center for Mathematical Modeling. V. Patel is supported by the UK Engineering and Physical Sciences Research Council [EP/L015811/1]. Y. \c{S}eng\"{u}l is partially supported by the Scientific and Technological Research Council of Turkey (T\"{U}BITAK) under the grant 116F093.

\textsc{Email address:} \texttt{mbul8060@karlin.mff.cuni.cz}

\textsc{Email address:} \texttt{victoria.patel@maths.ox.ac.uk} 

\textsc{Email address:}  \texttt{yasemin.sengul@sabanciuniv.edu}

\textsc{Email address:} \texttt{endre.suli@maths.ox.ac.uk}

 \end{document}